\tikzstyle dot=[style={circle,inner sep=1pt,fill}]
\tikzset{
  c/.style={every coordinate/.try}
}
\tikzstyle arrowstyle=[scale=1]
\tikzstyle directed=[postaction={decorate,decoration={markings,mark=at position 0.6 with {\arrow[arrowstyle]{stealth};}}}]
\tikzstyle reverse directed=[postaction={decorate,decoration={markings,mark=at position 0.4 with {\arrowreversed[arrowstyle]{stealth};}}}]
\tikzstyle dot=[style={circle,inner sep=1pt,fill}]
\def\qed{\nopagebreak\hfill{\rule{4pt}{7pt}}}
\def\ps{\mathrm{ps}}
 \newtheorem{thm}{Theorem}[section]
\newtheorem{lem}[thm]{Lemma}
\numberwithin{equation}{section}
\renewcommand{\theequation}{\arabic{section}.\arabic{equation}}
\renewcommand{\thetable}{\arabic{section}.\arabic{table}}
\renewcommand{\thefigure}{\arabic{section}.\arabic{figure}}
\newdimen\Squaresize \Squaresize=11pt
\newdimen\Thickness \Thickness=0.7pt
\def\Square#1{\hbox{\vrule width \Thickness
   \vbox to \Squaresize{\hrule height \Thickness\vss
    \hbox to \Squaresize{\hss#1\hss}
   \vss\hrule height\Thickness}
\unskip\vrule width \Thickness} \kern-\Thickness}
\def\Vsquare#1{\vbox{\Square{$#1$}}\kern-\Thickness}
\def\blk{\omit\hskip\Squaresize}
\def\young#1{
\vbox{\smallskip\offinterlineskip \halign{&\Vsquare{##}\cr #1}}}
\def\putd#1{\scriptsize{-#1-}}
\def\moins{\raise 1pt\hbox{{$\scriptstyle -$}}}
\newcommand\p{\circle*{0.3}}
\begin{document}

\begin{center}
\textbf{\large{Word-representability of triangulations of grid-covered cylinder graphs}}\textbf{ }
\par\end{center}

\begin{center}
Thomas Z.Q. Chen$^{a}$, Sergey Kitaev$^{b}$, Brian Y. Sun$^{c}$\\[6pt]
\par\end{center}

\begin{center}
$^{a,c}$Center for Combinatorics, LPMC-TJKLC\\
 Nankai University, Tianjin 300071, P. R. China\\
$^{b}$Department of Computer and Information Sciences,\\
 University of Strathclyde, Glasgow, G1 1XH, UK
\par\end{center}

\begin{center}
Email: $^{a}$\texttt{zqchern@163.com}, $^{b}$\texttt{sergey.kitaev@cis.strath.ac.uk},
$^{c}$\texttt{brian@mail.nankai.edu.cn}
\par
\end{center}

\par
\noindent \textbf{Abstract.}
A graph $G=(V,E)$ is word-representable if there exists a word $w$
over the alphabet $V$ such that letters $x$ and $y$, $x\neq y$, alternate in $w$ if and only if $(x,y)\in E$. Halld\'{o}rsson et al.\ have shown that a graph is word-representable if and only if it admits a so-called semi-transitive orientation. A corollary to this result is that any 3-colorable graph is word-representable.

Akrobotu et al.\ have shown that a triangulation of a grid graph is word-representable if and only if it is 3-colorable. This result does not hold for triangulations of grid-covered cylinder graphs, namely, there are such word-representable graphs with chromatic number 4. In this paper we show that word-representability of triangulations of grid-covered cylinder graphs with three sectors (resp., more than three sectors) is characterized by avoiding a certain set of six minimal induced subgraphs (resp., wheel graphs $W_5$ and $W_7$).\\

\noindent \textbf{Keywords:} word-representability, semi-transitive orientation, triangulation, grid-covered cylinder graph, forbidden induced subgraph

\section{Introduction}

Let $G=(V,E)$ be a simple (i.e. without loops and multiple edges) undirected graph with
the vertex set $V$ and the edge set $E$. We say that $G$ is {\em word-representable} if there exists a word $w$
over the alphabet $V$ such that letters $x$ and $y$ alternate in $w$ if and only if $(x,y)\in E$ for
any $x\neq y$. By definition, each letter in $V$ must appear in $w$.

The notion of word-representable graphs has its roots in algebra,
where a prototype of these graphs was used by Kitaev and Seif to study
the growth of the free spectrum of the well known
{\em Perkins semigroup} \cite{Kitaev08b}.

A number of results on word-representable graphs
were obtained in the literature \cite{Akrobotu, CKS, Collins14, GK, Halldorsson11, Halldorsson10,Halldorsson15, Kitaev08a, Kitaev11c, Kitaev11}. In
particular, Halld\'{o}rsson et al.~\cite{Halldorsson15} have shown that a graph is word-representable
if and only if it admits a {\em semi-transitive orientation} (to be defined in Section~\ref{sec2}), which, among other important corollaries, implies that all 3-colorable graphs are word-representable. We refer to the upcoming book~\cite{KitLoz} for the state of the art in the theory of word-representable graphs.

Most relevant to our paper are \cite{Akrobotu,CKS,GK}, where {\em triangulations} and {\em subdivisions} of certain graphs are studied with respect to word-representability. In particular, Akrobotu et al.~\cite{Akrobotu} proved that any triangulation of the graph $G$ associated with a convex polyomino is word-representable if and only if $G$ is 3-colorable. The method to prove this characterization theorem was essentially in showing that such a triangulation is 3-colorable if and only if it contains no wheel graph $W_5$ or $W_7$ as an induced subgraph (neither $W_5$ no $W_7$ are word-representable).

In this paper we extend the results of Akrobotu et al.~\cite{Akrobotu} to the case of grid-covered cylinder graphs, which is a cyclic version of rectangular grid graphs; see Subsection~\ref{GCC-sub} for definitions. It turns out that in this case, some of the graphs in question with chromatic number 4 are actually word-representable; for example, see the underlying graph in Figure~\ref{ex-semi-tran-O}. Still, assuming that there are at least four sectors in a grid-covered cylinder graph, word-representable triangulations of such graphs are characterized by avoiding $W_5$ and $W_7$ as induced subgraphs. On the other hand, we can also characterized word-representability of triangulations of grid-covered cylinder with three sectors as those avoiding the six graphs in Figure~\ref{non-repr-induced-subgraphs} as induced subgraphs. Moreover, we show that our characterization results in the case of more than three sectors hold even when some of cells (faces) of grid-covered cylinder graphs are not triangulated.

The paper is organized as follows. In Section~\ref{sec2} we will provide all necessary definitions and known results to be used. In particular, we will introduce the notion of a triangulation of a grid-covered cylinder graph, the main concern of this paper. Also, we will introduce the notion of a semi-transitive orientation, the main tool to prove our results. Further, we classify word-representable triangulations of graphs in question depending on the number of sectors they have. Namely, in Sections~\ref{sec3} we will consider the case of grid-covered cylinder graphs with more than three sections, and in Section~\ref{sec4} we will consider the case of grid-covered cylinder graphs with three sections. Finally, in Section~\ref{sec5} we discuss a generalization of our results and state an open problem.

\section{Definitions, notation, and known results}\label{sec2}

Suppose that $w$ is a word and $x$ and $y$ are two distinct letters in $w$.
We say that $x$ and $y$ {\it alternate} in $w$ if the deletion of all other
letters from the word $w$ results in either $xyxy\cdots$ or $yxyx\cdots$.

A graph $G=(V,E)$ is {\it word-representable} if there exists a word $w$ over
the alphabet $V$ such that letters $x$ and $y$ alternate in $w$ if and only if
$(x,y)\in E$ for each $x\neq y$. We say that {\it w represents G}, and such a word $w$
is called a {\it word-representant} for $G$.
For example, if the word $w=134231241$ then the subword induced with letters $1$ and $2$
is $12121$, hence the letters $1$ and $2$ alternate in $w$, and thus the respective vertices are connected in $G$. On the other hand, the letters $1$ and $3$
do not alternate in $w$, because removing all other letters we obtain $1331$; thus, $1$ and $3$ are not connected in $G$. Figure~\ref{wordandgraph} shows the graph represented by $w$.

\subsection{Semi-transitive orientations}

A directed graph (digraph) is {\it semi-transitive} if it is acyclic (that is, it contains no directed cycles), and
for any directed path $v_1\rightarrow v_2\rightarrow\cdots\rightarrow v_k$ with $v_i\in V$
for all $i, 1\leq i \leq k$, either
\begin{enumerate}
 \item[$\bullet$] there is no edge $v_1\rightarrow v_k$, or
 \item[$\bullet$] the edge $v_1\rightarrow v_k$ is present and there are edges $v_i\rightarrow v_j$ for all
     $1\leq i< j \leq k$. That is, in this case, the (acyclic) subgraph induced by the vertices
     $v_1,\ldots,v_k$ is transitive.
\end{enumerate}
We call such an orientation a {\em semi-transitive orientation}.

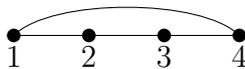
\begin{figure}[!htbp]
 \begin{center}
\begin{tikzpicture}
 \draw (0,0) node[below]{1} -- ++(1,0)node[below]{2} -- ++(1,0)node[below]{3} -- ++(1,0)node[below]{4};
 \draw (0,0) .. controls (0.5,.5)and (2.5,.5).. (3,0);
 \fill[black!100] (0,0) circle(0.5ex)
                 ++(1,0) circle(0.5ex)
                 ++(1,0) circle(0.5ex)
                 ++(1,0) circle(0.5ex);
\end{tikzpicture}
\caption{\label{wordandgraph} The graph represented by the word $w=134231241$}
\end{center}
\end{figure}

We can alternatively define semi-transitive orientations in terms of induced subgraphs. A \emph{semi-cycle} is the directed
acyclic graph obtained by reversing the direction of one arc of a directed cycle. An acyclic digraph is a
\emph{shortcut} if it is induced by the vertices of a semi-cycle and contains a pair of non-adjacent vertices. Thus, a
digraph on the vertex set $\{ v_1, \ldots, v_k\}$ is a shortcut if it contains a directed path $v_1\rightarrow
v_2\rightarrow \cdots \rightarrow v_k$, the arc $v_1\rightarrow v_k$, and it is missing an arc $v_i\rightarrow v_j$
for some $1 \le i < j \le k$; in particular, we must have $k\geq 4$, so that any shortcut is on at least four
vertices. Slightly abusing the terminology, in this paper we refer to the arc  $v_1\rightarrow v_k$ in the last definition as a shortcut (a more appropriate name for this would be a {\em shortcut arc}). Figure~\ref{shortcut} gives examples of shortcuts, where the edges $1\rightarrow 4$, $2\rightarrow 5$ and $3\rightarrow 6$ are missing, and hence $1\rightarrow 5$, $1\rightarrow 6$ and $2\rightarrow 6$ are shortcuts.

Thus, an orientation of a graph is semi-transitive if it is acyclic and contains no shortcuts. Halld\'{o}rsson et al.\ \cite{Halldorsson15} proved the following theorem that characterizes  word-representable graphs in terms of graph orientations.

\begin{figure}[!htbp]
 \begin{center}
\begin{tikzpicture}
 \draw (0,0) node[below]{1} -- ++(1,0)node[below]{2} -- ++(1,0)node[below]{3} -- ++(1,0)node[below]{4}
             -- ++(1,0)node[below]{5} -- ++(1,0)node[below]{6};
 \foreach \i in {0,1,...,4}
 { \draw[,directed] (\i,0)--+(1,0);
 }
 \draw[,directed] (0,0) .. controls (0.5,.5)and (1.5,.5).. (2,0);
  \draw[,directed] (0,0) .. controls (0.5,-.5)and (3.5,-.5).. (4,0);
   \draw[,directed] (0,0) .. controls (0.5,.85)and (4.5,.85).. (5,0);
    \draw[,directed] (1,0) .. controls (1.5,.5)and (2.5,.5).. (3,0);
     \draw[,directed] (1,0) .. controls (1.5,-.75)and (4.5,-.75).. (5,0);
      \draw[,directed] (2,0) .. controls (2.5,.5)and (3.5,.5).. (4,0);
       \draw[,directed] (3,0) .. controls (3.5,.5)and (4.5,.5).. (5,0);
 \fill[black!100] (0,0) circle(0.5ex)
                 ++(1,0) circle(0.5ex)
                 ++(1,0) circle(0.5ex)
                 ++(1,0) circle(0.5ex)
                 ++(1,0) circle(0.5ex)
                 ++(1,0) circle(0.5ex);
\end{tikzpicture}

\caption{ An example of a shortcut}\label{shortcut}
\end{center}
\end{figure}
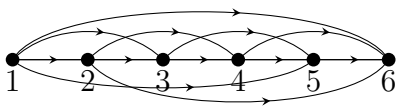

\begin{thm}\label{semitra}
 {\em \cite{Halldorsson11}} A graph is word-representable if and only if it admits a semi-transitive orientation.
\end{thm}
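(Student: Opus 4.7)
The plan is to prove both directions of Theorem~\ref{semitra} separately, with the forward implication (word-representability implies semi-transitive orientation) being the more approachable one and serving as a good warm-up for the harder converse.

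For the forward direction, suppose $w$ represents $G$. I would orient each edge $(x,y) \in E$ by setting $x \to y$ whenever the first occurrence of $x$ in $w$ precedes the first occurrence of $y$. Acyclicity is immediate, since the positions of first occurrences give a strict linear order on $V$ that is respected by every arc. For the no-shortcut condition, assume toward contradiction that $v_1 \to v_2 \to \cdots \to v_k$ is a directed path with $v_1 \to v_k \in E$ and some non-adjacent pair $v_i, v_j$ with $1 \leq i < j \leq k$. Since each consecutive pair $(v_\ell, v_{\ell+1})$ alternates in $w$ and the first occurrences satisfy $v_1 < v_2 < \cdots < v_k$, the subword on $\{v_\ell, v_{\ell+1}\}$ looks like $v_\ell v_{\ell+1} v_\ell v_{\ell+1} \cdots$. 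The edge $v_1 \to v_k$ likewise forces $v_1 v_k v_1 v_k \cdots$ on $\{v_1, v_k\}$. I would then track, for each occurrence of $v_i$ in $w$, which occurrences of $v_j$ can fall between it and the next $v_i$ by chaining the alternation constraints along the path, concluding that $v_i$ and $v_j$ must alternate after all, contradicting non-adjacency.

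For the converse, given a semi-transitive orientation of $G$, the plan is to build a representing word $w$ inductively. First I would fix a topological order $x_1, x_2, \ldots, x_n$ of the acyclic digraph, then process vertices according to this order, inserting each $x_m$ into an already-constructed word $w_{m-1}$ representing the induced subgraph on $\{x_1,\ldots,x_{m-1}\}$. The delicate step is choosing insertion positions for the copies of $x_m$ so that $x_m$ alternates with $x_i$ iff there is an arc between them. The no-shortcut condition is what makes this possible: the in-neighborhood of $x_m$, together with the vertices from which $x_m$ is reachable by directed paths, exhibits enough transitivity to be arranged into a ``compatible'' set of alternation requirements. Concretely, I would partition the alphabet relative to $x_m$ into in-neighbors, out-neighbors, and non-neighbors, and use the semi-transitive structure to argue that the required alternation pattern is simultaneously realizable without breaking any existing alternation.

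The main obstacle is the converse direction, and within it the verification that the inductive insertion can always be performed without destroying previously established (non-)alternations. The crux is to exploit semi-transitivity in the strong form: whenever one attempts to place $x_m$ so that it alternates with some in-neighbor $x_i$ but must fail to alternate with a non-adjacent $x_j$ sitting ``between'' them in the DAG, the absence of the arc $x_i \to x_j$ (or $x_j \to x_m$) would produce a shortcut on a path through $x_m$, contradicting the hypothesis. Making this bookkeeping precise, and showing the construction terminates in a finite word of manageable length, is where the real work lies; the rest of the argument is a careful but routine verification that alternation in the final word matches the edge set of $G$ exactly.
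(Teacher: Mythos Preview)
The paper does not prove Theorem~\ref{semitra}; it is quoted from \cite{Halldorsson11} (and \cite{Halldorsson15}) and used only as a black box. There is therefore no proof in this paper against which to compare your proposal.

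That said, two remarks on the sketch itself. In the forward direction, orienting by first occurrences is indeed the standard move, but the ``chaining alternation constraints along the path'' step has a gap as written: to make the interleaving argument go through one normally passes first to a \emph{uniform} representant (each letter occurring the same number of times), which is how the original proof proceeds; without uniformity the positions of the $v_\ell$ need not mesh well enough to force $v_i$ and $v_j$ to alternate. In the converse direction, your one-vertex-at-a-time insertion in topological order is not the route taken in \cite{Halldorsson15}, where the word is built globally from the semi-transitive orientation; more importantly, the invariant you describe (``do not destroy previously established (non-)alternations'') is too weak to carry the induction, since inserting copies of $x_m$ so that it alternates with all in-neighbours while failing to alternate with each non-neighbour may be genuinely impossible in an arbitrary representant $w_{m-1}$. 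A workable inductive scheme would need to maintain a much stronger structural invariant on $w_{m-1}$, and you have not specified one.
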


Thus, in this paper, to find out if a graph $G$ in question is word-representable, we will be studying existence of a semi-transitive orientation on $G$.

An immediate corollary to Theorem~\ref{semitra} is the following result.

\begin{thm}{\label{color}} {\em \cite{Halldorsson11}} Three-colorable graphs are word-representable.
\end{thm}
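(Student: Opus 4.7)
The plan is to invoke Theorem~\ref{semitra} by exhibiting a semi-transitive orientation on any 3-colorable graph $G$. Given a proper coloring $c\colon V\to\{1,2,3\}$, I would orient each edge $\{x,y\}\in E$ from the endpoint of smaller color to the endpoint of larger color; this is well defined because adjacent vertices receive distinct colors. Since the color strictly increases along every directed edge, the resulting orientation is automatically acyclic.

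The key observation is that along any directed path $v_1\to v_2\to\cdots\to v_k$ the values $c(v_1),c(v_2),\ldots,c(v_k)$ form a strictly increasing sequence in $\{1,2,3\}$, so $k\le 3$. A shortcut, by the definition recalled in Section~\ref{sec2}, requires at least four vertices. Hence no shortcut can occur in our orientation, so the orientation is semi-transitive, and Theorem~\ref{semitra} yields that $G$ is word-representable.

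There is essentially no genuine obstacle: the fact that only three colors cap the length of any strictly color-increasing path at three, which is less than the four vertices needed for a shortcut, does all the work. The temptation to avoid is trying to build a word-representant directly from the coloring, which is unnecessary once the semi-transitive orientation criterion is in hand; the argument reduces to a one-line application of Theorem~\ref{semitra}.
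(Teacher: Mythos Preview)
Your argument is correct and is precisely the standard derivation the paper has in mind when it calls this result ``an immediate corollary'' to Theorem~\ref{semitra}: orient each edge toward the larger color, note that directed paths have at most three vertices since colors strictly increase in $\{1,2,3\}$, and conclude that no shortcut (which needs at least four vertices) can arise.
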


\subsection{Grid-covered cylinder graphs}\label{GCC-sub}

A {\em grid-covered cylinder}, {\em GCC} for brevity, is a 3-dimensional figure formed by drawing vertical lines and horizontal circles on the surface of a cylinder, each of which are parallel to the generating line and the upper face of the cylinder, respectively. A GCC can be thought of as the object obtained by gluing the left and right sides of a rectangular grid. See the left picture in Figure~\ref{grid-covered-cylind-pic} for a schematic way to draw a GCC.  The vertical lines and horizontal circles are called the {\em grid lines} by us.

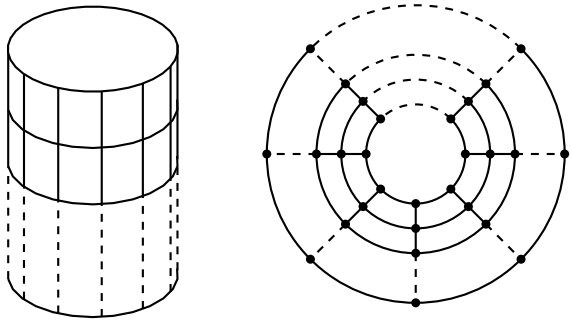
\begin{figure}[!htbp]
 \begin{center}
\begin{tikzpicture}[scale=1,x={(-0.8cm,-0.4cm)},y={(0.8cm,-0.4cm)},
    z={(0cm,1cm)},font=\large]
	\def\h{1.5}

	\foreach \t in {-39,-9,...,154}
		\draw[thick] ({cos(\t)},{sin(\t)},{1*\h}) --({cos(\t)},{sin(\t)},{2.0*\h});
	\foreach \t in {-39,-9,...,154}
		\draw[thick,dashed] ({cos(\t)},{sin(\t)},0) --({cos(\t)},{sin(\t)},{1*\h});

	\draw[,thick] ({cos(-39)},{sin(-39)},0) 
		\foreach \t in {-39,-25,...,154}
			{--({cos(\t)},{sin(\t)},0)};
			
	\draw[,thick] (1,0,{2*\h}) 
		\foreach \t in {10,20,...,360}
			{--({cos(\t)},{sin(\t)},{2*\h})}--cycle;
			
	\draw[,thick] ({cos(-39)},{sin(-39)},{1.5*\h}) 
		\foreach \t in {-39,-25,...,154}
			{--({cos(\t)},{sin(\t)},{1.5*\h})};
				
	\draw[,thick] ({cos(-39)},{sin(-39)},{1*\h}) 
		\foreach \t in {-39,-25,...,154}
			{--({cos(\t)},{sin(\t)},{1*\h})};
		
\end{tikzpicture}
\qquad
\begin{tikzpicture} [scale=.66, line join=round, >=latex,thick]
\foreach \r in {1,1.5,2,3}
{
\draw[dashed] ([shift=(45:\r cm)]0,0) arc (45:135:\r cm);
\draw[] ([shift=(45:\r cm)]0,0) arc (45:-225:\r cm);
}
\foreach \a in {-2,-1,0,1,-3,-4,-5}
{
\draw (45*\a:1 cm)-- (45*\a:2 cm);
\draw[dashed] (45*\a:2 cm)-- (45*\a:3 cm);
 \fill[black!100] (45*\a:1 cm) circle(0.5ex)
                 (45*\a:1.5 cm) circle(0.5ex)
                 (45*\a:2 cm) circle(0.5ex)
                 (45*\a:3 cm) circle(0.5ex)
                ;
}
\end{tikzpicture}
\caption{Grid-covered cylinder}\label{grid-covered-cylind-pic}
\end{center}
\end{figure}

Any GCC defines a graph whose set of vertices is given by intersection of the grid lines, and whose edges are parts of grid lines between the respective vertices. {\em Vertical edges} and {\em horizontal edges} are defined by vertical and horizontal grid lines, respectively. Such a graph is necessarily planar, and it is convenient to consider its edge-crossing-free embedding in the plane as shown schematically in the right picture in Figure~\ref{grid-covered-cylind-pic}, where by convention, the internal circle $C_0$ corresponds to the top face of the respective GCC.

We next introduce some notions/notation related to a GCC graph (abbreviated {\em GCCG}) $G_{m,n}$ defined by intersection of $m$ vertical lines and $n+1$ horizontal circles.  Let $C_0, C_1, \ldots, C_n$ denote the circles of $G_{m,n}$ in order from inside to outside. Further, let $C_i$, for $0\leq i\leq n$, have $m$ equally spaced vertices denoted by $v_{i0},v_{i1},\ldots,v_{i(m-1)}$ in the counter-clock-wise direction, so that for a fixed $y$ and any $x$, vertices $v_{xy}$ lie on the same vertical grid line labelled by $V_y$; see Figure~\ref{GCC-4-labelled} for an example of a proper labelling of a GCCG with four sectors.

 \begin{figure}[!htbp]
 \begin{center}
\begin{tikzpicture} [scale=0.8, line join=round, >=latex,thick]
\def\ra{1} 
\def\rb{1.93} 
\def\rc{2.85} 
\foreach \r in {0.7,1.6,2.5}
  \draw (0,0) circle (\r) ;
\foreach \a in {0,1,2,3}
{
\draw (90*\a:0.7 cm)-- (90*\a:2.5 cm);
 \fill[black!100] (90*\a:.7 cm) circle(0.5ex)
                 (90*\a:1.6 cm) circle(0.5ex)
                 (90*\a:2.5 cm) circle(0.5ex)
                ;
}
\path ( 0:\ra cm) node[above] {$v_{01}$}
      (90:\ra cm) node[left] {$v_{02}$}
      (180:\ra cm) node[above] {$v_{03}$}
      (-90:\ra cm) node[left] {$v_{00}$}
      ;
\path ( 0:\rb cm) node[above] {$v_{11}$}
      (90:\rb cm) node[left] {$v_{12}$}
      (180:\rb cm) node[above] {$v_{13}$}
      (-90:\rb cm) node[left] {$v_{10}$}
      ;
\path ( 0:\rc cm) node[above] {$v_{21}$}
      (90:\rc cm) node[left] {$v_{22}$}
      (180:\rc cm) node[above] {$v_{23}$}
      (-90:\rc cm) node[left] {$v_{20}$}
      ;
\end{tikzpicture}

\caption{Labelling of a GCCG}\label{GCC-4-labelled}
\end{center}
\end{figure}
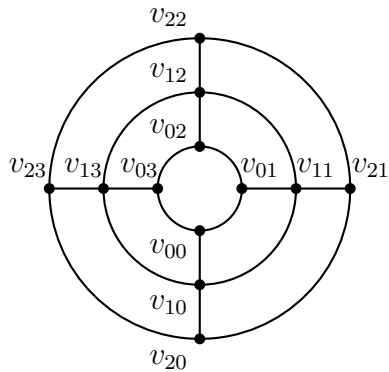

We say that the vertices on a circle $C_i$ are on the {\em $i$th level}. Also, we say that $G_{m,n}$ has $n$ {\em layers} and $m$ {\em sectors}. For $1\leq i\leq n$, the $i$th layer $L_i$ is the induced subgraph of  $G_{m,n}$ formed by the vertices on $C_{i-1}$ and $C_i$. For $1\leq j\leq m-1$, the $j$th sector $S_j$ is the induced subgraph formed by the vertices on the $(j-1)$th and $j$th vertical grid lines (i.e. $V_{j-1}$ and $V_j$); the $m$th sector $S_m$ is  the induced subgraph formed by the vertices on the $(m-1)$th and $0$th vertical grid lines (i.e. $V_{m-1}$ and $V_0$). For example, referring to Figure~\ref{GCC-4-labelled}, the layer $L_2$ is the induced subgraph formed by the vertices $\{v_{10}, v_{11},v_{12},v_{13},v_{20},v_{21},v_{22}, v_{23}\}$, while the sector $S_3$ is the induced subgraph formed by the vertices $\{v_{02}, v_{12},v_{22},v_{03},v_{13},v_{23}\}$.

 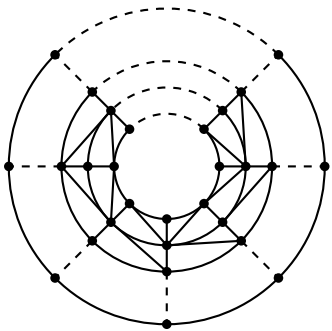
\begin{figure}[!htbp]
 \begin{center}
\begin{tikzpicture} [scale=0.7, line join=round, >=latex, thick]
\foreach \r in {1,1.5,2,3}
{
\draw[dashed] ([shift=(45:\r cm)]0,0) arc (45:135:\r cm);
\draw[] ([shift=(45:\r cm)]0,0) arc (45:-225:\r cm);
}
\foreach \a in {-2,-1,0,1,-3,-4,-5}
{
\draw (45*\a:1 cm)-- (45*\a:2 cm);
\draw[dashed] (45*\a:2 cm)-- (45*\a:3 cm);
 \fill[black!100] (45*\a:1 cm) circle(0.5ex)
                 (45*\a:1.5 cm) circle(0.5ex)
                 (45*\a:2 cm) circle(0.5ex)
                 (45*\a:3 cm) circle(0.5ex)
                ;
}

\draw (45:1 cm)--(0:1.5 cm)--(45:2 cm)
      (0:1.5 cm)--(-45:1 cm)
      (0:2 cm)--(-45:1.5 cm)
      ;
\draw (-45:1 cm)--(-90:1.5 cm)--(-45:2 cm)
      (-90:1.5 cm)--(-135:1 cm)
      (-90:2 cm)--(-135:1.5 cm)
      --(-180:1 cm)--(-225:1.5 cm)-- (-180:2 cm)--(-135:1.5 cm)
      ;
\end{tikzpicture}
\caption{A triangulation of a GCCG}\label{triang-grid-cover-cylinder-general}
\end{center}
\end{figure}

Intersections of grid lines define GCCG's {\em cells}  all of which are 4-cycles. Note that in the case of $m=4$, the vertices and edges on $C_0$ and $C_m$ form 4-cycles, but we do not call these cells. Thus, by definition, $G_{m,n}$ has $mn$ cells.  A {\em triangulation} $T$ of $G_{m,n}$ is the graph obtained from $G_{m,n}$ by triangulating {\em each} cell in it. The total number of (possibly isomorphic) triangulations of $G_{m,n}$ is $2^{mn}$. The subdivision edges in $T$ that are used to subdivide cells into triangles are called {\em diagonal edges}.

\section{Word-representable triangulations of GCCGs with more than three sectors}\label{sec3}

For $n\geq 3$, a {\em wheel graph} $W_n$ is obtained by adding to the {\em cycle graph} $C_n$ an all adjacent vertex. It is known~\cite{KitLoz,Kitaev08a} that for odd $n\geq 5$, $W_n$ is not word-representable. In particular, $W_5$ and $W_7$ shown in bold in Figure~\ref{just-4-cases} (each twice) are not word-representable.

In this section we will prove the following theorem.

\begin{thm}\label{thm-morethan-3} A triangulation of a GCCG with more than three sectors is word-representable if and only if it contains no $W_5$ or $W_7$ as an induced subgraph. \end{thm}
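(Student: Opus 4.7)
The ``only if'' direction is immediate: word-representability is hereditary under induced subgraphs, and $W_5, W_7$ were just noted to be non-representable. Hence any triangulation containing $W_5$ or $W_7$ as an induced subgraph is not word-representable.

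For the converse, assume $T$ is a triangulation of $G_{m,n}$ with $m\geq 4$ that contains no induced $W_5$ or $W_7$. By Theorem~\ref{semitra} it suffices to equip $T$ with a semi-transitive orientation, which I would construct explicitly as follows: orient every vertical edge from $C_{i-1}$ to $C_i$, every diagonal edge from its inner endpoint to its outer endpoint, and the $m$ horizontal edges on each circle $C_i$ acyclically by choosing a source vertex on $C_i$ and directing all horizontal arcs away from it around the circle. The level of a vertex is then weakly increasing along every directed edge and strictly increasing along vertical and diagonal ones, so the overall orientation is acyclic.

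The substantive work lies in verifying the no-shortcut condition. The key observation is that any potential shortcut is local: since the endpoints of a shortcut arc sit on circles whose indices differ by at most $1$, the underlying directed path lives in the union of at most two adjacent layers and visits vertices on at most three consecutive circles. My plan is to enumerate the local triangulation patterns around each internal vertex $v_{ij}$ (its degree in $T$ is between $4$ and $8$, determined by which of the four incident cells have a diagonal meeting $v_{ij}$) and to isolate those patterns for which the orientation above fails by producing a shortcut. A short case analysis should show that every such ``bad'' pattern either creates an induced $W_5$ or $W_7$ at $v_{ij}$ (hence is excluded by hypothesis), or can be repaired by a local modification of the orientation, such as reversing the direction of selected diagonals or relocating the source vertex on the relevant circle.

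The main obstacle is that, unlike the rectangular-grid case of~\cite{Akrobotu}, a $W_5/W_7$-free GCCG triangulation need not be $3$-colorable (as noted in the abstract and illustrated by Figure~\ref{ex-semi-tran-O}), so the direct route through Theorem~\ref{color} is unavailable, and some modifications of the naive orientation will be genuinely necessary. The hypothesis $m\geq 4$ is precisely what gives enough room on each circle for such local modifications to be carried out without interfering with one another; with only three sectors, each $C_i$ is itself a triangle and this flexibility disappears, which is why three-sector GCCGs require the richer list of forbidden subgraphs treated in Section~\ref{sec4}.
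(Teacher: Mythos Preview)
Your ``only if'' direction is fine, but the converse is only a sketch, and the concrete orientation you propose does not work even before any repairs. Take $m\ge 4$, a single layer, and let the diagonals in sectors $S_1$ and $S_2$ be $v_{0,0}v_{1,1}$ and $v_{0,2}v_{1,1}$, respectively. With your rule (all verticals and diagonals outward, horizontals on $C_0$ sourced at $v_{0,0}$) you get the directed path $v_{0,0}\to v_{0,1}\to v_{0,2}\to v_{1,1}$ together with the arc $v_{0,0}\to v_{1,1}$, while $v_{0,0}v_{0,2}$ is a non-edge for $m\ge 4$: a shortcut. This two-cell configuration contains no $W_5$ or $W_7$, so it is not excluded by hypothesis, and your suggested ``local modifications'' (flip a diagonal, move a source) are neither specified nor shown to be globally consistent when many such configurations coexist along a layer. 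As written, the hard half of the theorem is not proved.

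The paper's argument is organised quite differently. The key step you are missing is a global structural lemma (Lemma~\ref{structure-lem}): if no induced $W_5$ or $W_7$ occurs, then for every $i\ge 2$ all cells on layer $L_i$ have the same ``type'' (the diagonal either shares or avoids a vertex with the diagonal of the cell just above). This is exactly what the $W_5/W_7$-freeness buys, and it is what makes a \emph{global} orientation definable. The orientation $O$ actually used then orients each diagonal in the \emph{horizontal} direction of its sector (so some diagonals go inward, unlike yours), fixes the same acyclic orientation on every circle, and lets the direction of the vertical edges on layer $L_{i+1}$ agree with or reverse those on $L_i$ according to whether $L_{i+1}$ is of type $A$ or $B$. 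Lemma~\ref{orient-semi-trans-lem} then checks, via a short case analysis confined to two adjacent vertical lines, that $O$ is acyclic and shortcut-free. If you want to salvage your approach, the first thing to do is prove an analogue of Lemma~\ref{structure-lem}; without it there is no mechanism that ties the local diagonal pattern to the forbidden subgraphs, and ad hoc local repairs will not glue into a semi-transitive orientation of the whole cylinder.
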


The proof of Theorem~\ref{thm-morethan-3} will follow from Lemma~\ref{orient-semi-trans-lem} below, whose proof is based on Lemma~\ref{structure-lem} giving the structure of GCCGs with more than three sectors that contain no $W_5$ or $W_7$ as induced subgraphs.

Next, we introduce the notion of  a {\em type} of a cell subdivision on layer $L_i$ for $i\geq 2$. We say that a cell $C$ defined by $v_{ij}v_{(i+1)j}v_{(i+1)(j+1)}v_{i(j+1)}$, where $1\leq i\leq n-1$ and $0\leq j\leq m-2$, in a triangulation $T$ of a GCCG $G_{m,n}$ is of {\em type $A$} if its diagonal edge has no vertex in common with the diagonal edge of the cell defined by $v_{(i-1)j}v_{ij}v_{i(j+1)}v_{(i-1)(j+1)}$. We say that $C$ is of {\em type $B$} otherwise. Note that the type of cells on $L_1$ is not defined.

\begin{figure}[!htbp]
\begin{center}
\begin{tikzpicture}[scale=0.8,dot/.style={circle,inner sep=1pt,fill,name=#1}]
\def\dist{4}
\coordinate (BL) at (-1.5,-1.5) ; 
\coordinate (TR) at (1.5,1.5) ; 
\coordinate (Li) at (-2.2,-.5) ; 
\coordinate (bln) at (-1,-1);
\coordinate (O) at (0,0);

\begin{scope}[every coordinate/.style={shift={(0,0)}}]
\draw[step=1cm,very thin] ([c]BL) grid ([c]TR); 
\path ([c]Li) node[right] {$L_i$} ++(0,1) node[right] {$L_{i-1}$}
      ++(1.3,1.2) node[right] {$S_j$} ++(0.75,0) node[right] {$S_{j+1}$};
\fill[black!100] ([c]bln) circle(0.5ex) ++(0,1)circle(0.5ex) ++(0,1)circle(0.5ex)
	      ++(1,0)circle(0.5ex) ++(1,0)circle(0.5ex)
	      ++(0,-1)circle(0.5ex)  ++(0,-1)circle(0.5ex)
	      ++(-1,0)circle(0.5ex) ++(0,1)circle(0.5ex);
\draw[thick] ([c]bln)-- ++(0,1)-- ++(1,1)-- ++(1,0)-- ++(0,-2)-- ++(-2,0)-- ++(2,2)
	      ([c]bln)++(1,0)-- ++(0,2)
	      ([c]bln)++(0,1)-- ++(2,0)
	      ([c]bln)++(1,1)-- ++(1,-1);
\end{scope}

\begin{scope}[every coordinate/.style={shift={(\dist,0)}}]
\draw[step=1cm,very thin] ([c]BL) grid ([c]TR); 
\path ([c]Li) node[right] {$L_i$} ++(0,1) node[right] {$L_{i-1}$}
      ++(1.3,1.2) node[right] {$S_j$} ++(0.75,0) node[right] {$S_{j+1}$};
\fill[black!100] ([c]bln) circle(0.5ex) ++(0,1)circle(0.5ex) ++(0,1)circle(0.5ex)
	      ++(1,0)circle(0.5ex) ++(1,0)circle(0.5ex)
	      ++(0,-1)circle(0.5ex)  ++(0,-1)circle(0.5ex)
	      ++(-1,0)circle(0.5ex) ++(0,1)circle(0.5ex);
\draw[thick] ([c]O)-- ++(-1,-1)-- ++(1,0)-- ++(0,2)-- ++(-1,-1)-- ++(0,-1) ++(0,1)-- ++(2,0)
	       ([c]O)++(0,-1)-- ++(1,1)-- ++(-1,1);
\end{scope}

\begin{scope}[every coordinate/.style={shift={(2*\dist,0)}}]
\draw[step=1cm, thin] ([c]BL) grid ([c]TR); 
\path ([c]Li) node[right] {$L_i$} ++(0,1) node[right] {$L_{i-1}$}
      ++(1.3,1.2) node[right] {$S_j$} ++(0.75,0) node[right] {$S_{j+1}$};
\fill[black!100] ([c]bln) circle(0.5ex) ++(0,1)circle(0.5ex) ++(0,1)circle(0.5ex)
	      ++(1,0)circle(0.5ex) ++(1,0)circle(0.5ex)
	      ++(0,-1)circle(0.5ex)  ++(0,-1)circle(0.5ex)
	      ++(-1,0)circle(0.5ex) ++(0,1)circle(0.5ex);
\draw[thick] ([c]O)-- ++(1,1)-- ++(-2,0)-- ++(2,-2)-- ++(-1,0)-- ++(0,2) ++(1,0)-- ++(0,-2)
	     ++(0,1)-- ++(-2,0)++(0,1)--++(0,-1)--++(1,-1) ;%
\end{scope}
\begin{scope}[every coordinate/.style={shift={(3*\dist,0)}}]
\draw[step=1cm, thin] ([c]BL) grid ([c]TR); 
\path ([c]Li) node[right] {$L_i$} ++(0,1) node[right] {$L_{i-1}$}
      ++(1.3,1.2) node[right] {$S_j$} ++(0.75,0) node[right] {$S_{j+1}$};
\fill[black!100] ([c]bln) circle(0.5ex) ++(0,1)circle(0.5ex) ++(0,1)circle(0.5ex)
	      ++(1,0)circle(0.5ex) ++(1,0)circle(0.5ex)
	      ++(0,-1)circle(0.5ex)  ++(0,-1)circle(0.5ex)
	      ++(-1,0)circle(0.5ex) ++(0,1)circle(0.5ex);
\draw[thick] ([c]O)-- ++(-1,1)-- ++(0,-1)-- ++(2,0)-- ++(-1,1)-- ++(-1,0) ++(1,0)-- ++(0,-2)
	       ([c]O)++(-1,0)-- ++(1,-1)-- ++(1,1);	
\end{scope}
\end{tikzpicture}
\caption{The bottom-left cells are of type $A$ and the bottom-right cells are of type $B$}\label{just-4-cases}
\end{center}
\end{figure}

\begin{lem}\label{structure-lem} If a triangulation of a GCCG with more than three sectors contains no $W_5$ or $W_7$ as an induced subgraph then each cell on layer $L_i$, for $i\geq 2$, must be of the same type. \end{lem}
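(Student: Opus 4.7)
The plan is to prove the contrapositive. Assume that two adjacent cells on some layer $L_i$, $i\geq 2$, have different types; I will exhibit an induced $W_5$ or $W_7$. Because $m>3$ the sectors form a cycle, so if not all cells on $L_i$ share a type then there must be two cells in consecutive sectors on $L_i$ whose types differ, and it suffices to derive a contradiction from this local situation.

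Consider three consecutive vertical lines $V_{j-1},V_j,V_{j+1}$ and three consecutive circles $C_{i-2},C_{i-1},C_i$. Since $m>3$, this gives nine distinct vertices which I label
\[
\begin{array}{ccc}
p & q & r\\
s & t & u\\
w & x & y
\end{array}
\]
with rows on $C_{i-2},C_{i-1},C_i$ respectively. Denote the four local cells by $B_L,B_R$ (on layer $L_{i-1}$) and $A_L,A_R$ (on layer $L_i$), and assume without loss of generality that $A_L$ is of type $A$ and $A_R$ is of type $B$.

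The key observation is that the eight outer vertices always form the 8-cycle
\[
p-q-r-u-y-x-w-s-p
\]
in the underlying GCCG, and each of the four local diagonals either passes through $t$ (adding a new neighbour of $t$) or not (adding a chord to that 8-cycle). A short enumeration of the $2\times 2=4$ subcases coming from the two choices for the type-$A$ pair $(B_L,A_L)$ and the two choices for the type-$B$ pair $(B_R,A_R)$ shows: in two subcases exactly one diagonal passes through $t$, while the three remaining diagonals are chords that precisely shortcut the 8-cycle into a 5-cycle on the five neighbours of $t$, giving an induced $W_5$; in the other two subcases three diagonals pass through $t$ and the single remaining diagonal shortcuts the 8-cycle into a 7-cycle on the seven neighbours of $t$, giving an induced $W_7$.

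The main obstacle, and the only real technical step, is verifying that the purported wheel is actually induced in each of the four subcases. For this I would observe that any cell outside the local $3\times 3$ block shares at most two of the nine vertices with it, and when it does share two they lie on a common boundary edge of the block; hence the diagonal of any such outside cell has at least one endpoint outside the block and cannot create a new edge among the nine vertices. Thus the only edges among the nine vertices come from the GCCG adjacencies and from the four chosen diagonals, and a direct case-by-case check confirms that for each of the four subcases the only edges among the relevant five or seven neighbours of $t$ are exactly the edges of the target cycle. This produces the required induced $W_5$ or $W_7$, contradicting the hypothesis and completing the proof.
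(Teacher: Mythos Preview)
Your proposal is correct and follows essentially the same approach as the paper: reduce to two adjacent cells on $L_i$ of different types, look at the $2\times 2$ block of cells they determine together with the two cells above on $L_{i-1}$, and check that in each of the four resulting configurations the centre vertex together with its neighbours forms an induced $W_5$ or $W_7$. The paper presents the four cases via a figure and handles the reflected cases by symmetry; you spell out the same case analysis verbally and are more explicit than the paper about why the wheel is induced (using $m>3$ to rule out extraneous edges among the nine vertices).
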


\begin{proof} If two cells on a layer $L_i$, $i\geq 2$, are of different types, then there must be two adjacent cells on $L_i$ of different types. Suppose that these cells are in the sectors $S_j$ and $S_{j+1}$. Considering these cells together with two cells in the same sectors on the layer $L_{i-1}$ we will meet either $W_5$ or $W_7$ as an induced subgraph, as shown in Figure~\ref{just-4-cases} (where we assumed that the bottom-left cells are of type $A$; the cases when these are of type $B$ are obtained from those in Figure~\ref{just-4-cases} by reflection with respect to a vertical line). Contradiction. \end{proof}

By Lemma~\ref{structure-lem}, each cell on a layer $L_i$, for $i\geq 2$, is of the same type, and thus the notion of a {\em layer type} (starting from layer 2 upwards) is well defined as the type of the layer's cells.

Next, we describe an orientation $O$ of a triangulation $T$ of a GCCG $G_{m,n}$ with $m\geq 4$, which will be shown by us in Lemma~\ref{orient-semi-trans-lem} to be semi-transitive.

\begin{itemize}
\item Orient the horizontal edges of $T$ as follows: for $0\leq x\leq n$ and $0\leq y\leq m-3$, $v_{x0}\rightarrow v_{x(m-1)}$, $v_{x(m-1)}\rightarrow v_{x(m-2)}$, and $v_{xy}\rightarrow v_{x(y+1)}$. Thus, all horizontal edges in the same sector receive the same orientation. In fact, we could pick any semi-transitive orientation of the cycle graph on $C_0$ and make the orientation of any other horizontal edge $h$ be the same as the orientation of the edge on $C_0$ belonging to $h$'s sector. However, we fixed a particular orientation, which is easy to deal with.
\item Each diagonal edge $d$ is oriented in the same direction as the horizontal edges of the cell $d$ belongs to. Thus, each horizontal or diagonal edge in a sector has the same orientation, which makes the {\em orientation of a sector} to be a well-defined notion.
\item Finally, orient vertical edges as follows: $v_{0y}\rightarrow v_{1y}$ for $0\leq y\leq m-1$. More generally, for $1\leq x\leq n-1$ and $0\leq y\leq m-1$, a vertical edge $v_{xy}v_{(x+1)y}$ has the same orientation as the edge  $v_{(x-1)y}v_{xy}$ if the layer $L_{x+1}$ is of type $A$, and it is oriented in the opposite direction if $L_{x+1}$ is of type $B$. Thus, we can orient all vertical edges, layer by layer, starting from layer $L_2$ and following our rules, so that all vertical edges on the same layer will be oriented in the same direction.
\end{itemize}

\begin{figure}[!htbp]
\begin{center}

\begin{tikzpicture}[scale=1.2,dot/.style={circle,inner sep=1.6pt,fill,name=#1}]
\path (-128:0.7 cm) node {$v_{00}$};
\path (-54:0.7 cm) node {$v_{01}$};
\path (29:0.7 cm) node {$v_{02}$};
\path (90:0.7 cm) node {$v_{03}$};
\path (160:0.7 cm) node {$v_{04}$};

\node[dot=L1] at (-0.588, -0.809) {} ;
\node[dot=L2] at (0.588, -0.809) {} ;
\node[dot=L3] at (0.951, 0.309) {} ;
\node[dot=L4] at (0., 1.) {} ;
\node[dot=L5] at (-0.951, 0.309) {} ;
\node[dot=L6] at (-0.882, -1.214) {} ;
\node[dot=L7] at (0.882, -1.214) {} ;
\node[dot=L8] at (1.427, 0.464) {} ;
\node[dot=L9] at (0., 1.5) {} ;
\node[dot=L10] at (-1.427, 0.464) {} ;
\node[dot=L11] at (-1.176, -1.618) {} ;
\node[dot=L12] at (1.176, -1.618) {} ;
\node[dot=L13] at (1.902, 0.618) {} ;
\node[dot=L14] at (0., 2.) {} ;
\node[dot=L15] at (-1.902, 0.618) {} ;
\node[dot=L16] at (-1.469, -2.023) {} ;
\node[dot=L17] at (1.469, -2.023) {} ;
\node[dot=L18] at (2.378, 0.773) {} ;
\node[dot=L19] at (0., 2.5) {} ;
\node[dot=L20] at (-2.378, 0.773) {} ;
\draw (L16)  node[left] {$v_{30}$};
\draw (L17)  node[right] {$v_{31}$};
\draw (L18)  node[right] {$v_{32}$};
\draw (L19)  node[above] {$v_{33}$};
\draw (L20)  node[left] {$v_{34}$};

\draw[,directed] (L1) -- (L2) ;
\draw[,directed] (L1) -- (L6) ;
\draw[,directed] (L2) -- (L7) ;
\draw[,directed] (L3) -- (L2) ;
\draw[,directed] (L3) -- (L8) ;
\draw[,directed] (L4) -- (L3) ;
\draw[,directed] (L4) -- (L5) ;
\draw[,directed] (L4) -- (L9) ;
\draw[,directed] (L5) -- (L1) ;
\draw[,directed] (L5) -- (L10) ;
\draw[,directed] (L6) -- (L2) ;
\draw[,directed] (L6) -- (L7) ;
\draw[,directed] (L6) -- (L12) ;
\draw[,directed] (L8) -- (L2) ;
\draw[,directed] (L8) -- (L7) ;
\draw[,directed] (L8) -- (L12) ;
\draw[,directed] (L9) -- (L3) ;
\draw[,directed] (L9) -- (L5) ;
\draw[,directed] (L9) -- (L8) ;
\draw[,directed] (L9) -- (L10) ;
\draw[,directed] (L9) -- (L13) ;
\draw[,directed] (L9) -- (L15) ;
\draw[,directed] (L10) -- (L1) ;
\draw[,directed] (L10) -- (L6) ;
\draw[,directed] (L10) -- (L11) ;
\draw[,directed] (L11) -- (L6) ;
\draw[,directed] (L11) -- (L12) ;
\draw[,directed] (L11) -- (L17) ;
\draw[,directed] (L12) -- (L7) ;
\draw[,directed] (L13) -- (L8) ;
\draw[,directed] (L13) -- (L12) ;
\draw[,directed] (L13) -- (L17) ;
\draw[,directed] (L14) -- (L9) ;
\draw[,directed] (L14) -- (L13) ;
\draw[,directed] (L14) -- (L15) ;
\draw[,directed] (L14) -- (L18) ;
\draw[,directed] (L14) -- (L20) ;
\draw[,directed] (L15) -- (L10) ;
\draw[,directed] (L15) -- (L11) ;
\draw[,directed] (L15) -- (L16) ;
\draw[,directed] (L16) -- (L11) ;
\draw[,directed] (L16) -- (L17) ;
\draw[,directed] (L17) -- (L12) ;
\draw[,directed] (L18) -- (L13) ;
\draw[,directed] (L18) -- (L17) ;
\draw[,directed] (L19) -- (L14) ;
\draw[,directed] (L19) -- (L18) ;
\draw[,directed] (L19) -- (L20) ;
\draw[,directed] (L20) -- (L15) ;
\draw[,directed] (L20) -- (L16) ;
\end{tikzpicture}
 \caption{The semi-transitive orientation $O$ on $T_{5,4}$}\label{ex-semi-tran-O}
\end{center}
\end{figure}

In what follows, when we refer to $C_0$, we mean the cycle graph induced by the vertices on $C_0$.

\begin{lem}\label{orient-semi-trans-lem} The orientation $O$ is semi-transitive. \end{lem}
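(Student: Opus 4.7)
The goal is to verify the two defining conditions of a semi-transitive orientation on $O$: acyclicity and the absence of shortcuts.

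\emph{Acyclicity.} I plan to exhibit an explicit real-valued potential $\phi:V\to\mathbb{R}$ that strictly increases along every arc of $O$. Let $\delta_i\in\{+1,-1\}$ be the common direction of the vertical edges in layer $L_i$: thus $\delta_1=+1$ and, for $i\ge 2$, $\delta_i=\delta_{i-1}$ if $L_i$ is of type $A$ and $\delta_i=-\delta_{i-1}$ if $L_i$ is of type $B$. Set $h_x=\sum_{i=1}^{x}\delta_i$ with $h_0=0$, and
\[
\phi(v_{xy})=\tfrac{\varepsilon}{2}\,h_x+g(y),
\]
where $g(y)=y$ for $0\le y\le m-2$, $g(m-1)=m-2-\varepsilon$, and $\varepsilon>0$ is sufficiently small. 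A direct check of the three edge types then shows that $\phi$ strictly increases along every arc: horizontal arcs contribute only via $g$, and the sign agrees with the prescribed semi-transitive orientation of the cycle on each level; vertical arcs in $L_i$ change $\phi$ by exactly $\varepsilon\,\delta_i/2$ with the correct sign by definition of $\delta_i$; and diagonal arcs contribute $\pm\varepsilon/2$ vertically but at least $\varepsilon$ horizontally (with the correct sign by the sector rule), so the horizontal part dominates. The existence of such a $\phi$ immediately makes $O$ acyclic.

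\emph{No shortcuts.} Assume, for contradiction, that $u\to v$ is a shortcut in $O$ witnessed by a directed path $u=w_0\to w_1\to\cdots\to w_\ell=v$ of length $\ell\ge 3$ together with a missing arc $w_i\to w_j$ for some $0\le i<j\le\ell$. I would split into three cases according to whether the arc $uv$ is horizontal, vertical, or diagonal. In each case the argument uses (i) the uniformity of $O$ inside each sector (for horizontal and diagonal edges) and inside each layer (for vertical edges), which prevents a directed path from reversing horizontal direction inside one sector or vertical direction inside one layer; and (ii) Lemma~\ref{structure-lem}, which forces all cells on a given layer to share a common type and so aligns the orientations of the diagonals surrounding $uv$. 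These two rigidity properties confine the intermediate vertices $w_1,\dots,w_{\ell-1}$ to a bounded neighborhood of $uv$, where the triangulation and its orientation are essentially determined; a short local case analysis within that neighborhood then shows that the induced subgraph on $\{w_0,\dots,w_\ell\}$ is transitive, contradicting the existence of the missing arc.

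\emph{Main obstacle.} The acyclicity part becomes mechanical once the potential $\phi$ is identified: the decisive point is to make the per-layer vertical jump $\varepsilon/2$ strictly smaller than the smallest horizontal step---namely the one of length $\varepsilon$ incurred between $v_{x(m-1)}$ and $v_{x(m-2)}$---so that diagonal arcs cannot cancel their horizontal contributions. The main obstacle is the shortcut case analysis, since \emph{a priori} the witness path can wander through many layers and sectors of the cylinder. The decisive ingredients there are the uniformity of $O$ inside sectors and layers together with the uniform layer type granted by Lemma~\ref{structure-lem}, which jointly confine the path to a finite local region and reduce the verification to a manageable number of configurations.
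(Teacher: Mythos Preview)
Your acyclicity argument via the potential $\phi$ is correct and is a genuinely different approach from the paper's. The paper argues combinatorially: any directed cycle must use horizontal or diagonal edges, and since all such edges in a given sector point the same way, a cycle would force every sector to carry the same horizontal direction, contradicting the chosen orientation of $C_0$. Your potential-function argument is cleaner and more robust; the key insight---making the vertical increment $\varepsilon/2$ strictly smaller than the minimal horizontal increment $\varepsilon$ (occurring between columns $m-1$ and $m-2$)---handles all three edge types uniformly. This buys you a one-line proof once $\phi$ is written down, whereas the paper's argument is short but implicitly relies on tracking the horizontal projection of a hypothetical cycle.

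For the shortcut part, your plan coincides with the paper's strategy. The paper first reduces to the case where the witnessing path $P$ touches at most two vertical lines (your ``bounded neighborhood'' observation, which the paper phrases as: if $P$ visits three consecutive lines $V_x,V_{x+1},V_{x+2}$, then once on $V_{x+2}$ it cannot return to $V_{x+1}$ without forcing $C_0$ to be a directed cycle). It then splits according to whether the arc $v_1\to v_k$ is vertical, diagonal, or horizontal---exactly your three cases---and in each case uses the layer-type information from Lemma~\ref{structure-lem} to show that $P$ cannot in fact reach $v_k$. One point of caution: you phrase the intended contradiction as ``the induced subgraph on $\{w_0,\dots,w_\ell\}$ is transitive,'' but the paper instead shows that no such path $P$ of length $\ge 3$ exists at all. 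Both conclusions yield a contradiction, but the latter is what actually falls out of the local analysis, so you may find it more natural to aim for that when you fill in the details.
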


\begin{proof}
First note that $O$ is acyclic. Indeed, any cycle must involve horizontal or diagonal edges, and since all such edges in a sector have the same direction, existence of a cycle in $O$ would force all sectors be oriented in the same direction contradicting the definition of $O$.

Suppose now that there is a shortcut edge $v_1\rightarrow v_k$, which is defined by a directed path $P=v_1\rightarrow v_2 \rightarrow \cdots \rightarrow v_k$, for $k\geq 4$. Taking into account that each edge in a sector has the same direction, no more than one (horizontal or diagonal) edge from each sector can be present in $P$ unless $P$ goes around the entire cylinder, which is not possible by the definition of $O$ (in particular, in such a situation $C_0$ would be forced to be a directed cycle).

Further, note that each directed path in $T$ induces a directed path on $C_0$ because each sector has the same orientation. In particular, steps on a vertical line correspond to no step on $C_0$. Taking into account this observation, there are only two possibilities for a shortcut:
\begin{itemize}

\item {\bf Case 1.} $P$ involves vertices from at least three consecutive different vertical lines, say $ V_x,  V_{x+1}$ and $V_{x+2}$ for some $x$, in order $P$ visits the lines; we also assume that $v_1$ is on $V_x$. However, once $P$ reaches $V_{x+2}$, the vertices on $V_{x+1}$ are not reachable for $P$ giving no possibility for a shortcut unless $P$ goes around the cylinder. But in the latter case, $C_0$ will be forced to have a shortcut or a cycle contradicting the definition of $O$.

\item {\bf Case 2.} Only two vertical lines are involved in $P$. By symmetry, only three subcases are possible for a  shortcut, which are shown in Figure~\ref{case2-figure}.

\begin{figure}[!htbp]
\begin{center}
\begin{tikzpicture}[scale=0.8]
\def\dist{3}
\coordinate (BL) at (-1.5,-1.5) ; 
\coordinate (TR) at (0.5,1.5) ; 
\coordinate (Vx) at (-1.5,-2) ; 
\coordinate (bln) at (-1,-1);
\coordinate (O) at (0,0);

\begin{scope}[every coordinate/.style={shift={(0,0)}}]
\draw[step=1cm,very thin] ([c]BL) grid ([c]TR); 
\path ([c]Vx) node[right] {$V_{x+1}$} ++(1.3,0) node[right] {$V_{x}$};

\foreach \x in {-1,...,0} 
 \foreach \y in {-1,...,1}
 { \fill[black!100] [c](\x,\y) circle(0.5ex);
 }
 \draw[very thick,directed] [c](0,0) node[above right] {$v_1$}-- ++(0,-1) node[above right] {$v_k$};

\end{scope}

\begin{scope}[every coordinate/.style={shift={(\dist,0)}}]

\draw[step=1cm,very thin] ([c]BL) grid ([c]TR); 
\path ([c]Vx) node[right] {$V_{x+1}$} ++(1.3,0) node[right] {$V_{x}$};

\foreach \x in {-1,...,0} 
 \foreach \y in {-1,...,1}
 { \fill[black!100] [c](\x,\y) circle(0.5ex);
 }
 \draw[very thick,directed] [c](0,0) node[above right] {$v_1$}-- ++(-1,-1) node[above left] {$v_k$};
\end{scope}

\begin{scope}[every coordinate/.style={shift={(2*\dist,0)}}]

\draw[step=1cm,very thin] ([c]BL) grid ([c]TR); 
\path ([c]Vx) node[right] {$V_{x+1}$} ++(1.3,0) node[right] {$V_{x}$};

\foreach \x in {-1,...,0} 
 \foreach \y in {-1,...,1}
 { \fill[black!100] [c](\x,\y) circle(0.5ex);
 }
 \draw[very thick,directed] [c](0,0) node[above right] {$v_1$}-- ++(-1,0) node[above right] {$v_k$};

\end{scope}
\end{tikzpicture}
\caption{\label{case2-figure} Three possibilities for Case 2}
\end{center}
\end{figure}
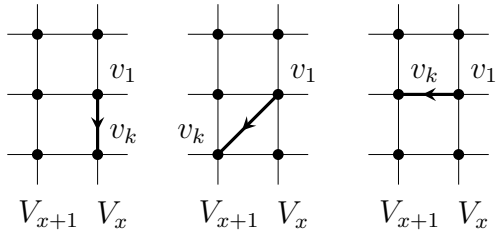

 {\bf Subcase 2.1.} Consider the leftmost picture in Figure \ref{case2-figure}.
 The orientations of edges between vertical lines $V_x$ and $V_{x+1}$ ensure that $v_2$ cannot be on $V_{x+1}$. But then the presence of the edge $v_1\rightarrow v_2$ shows that the cell containing $\{v_1,v_k,y\}$ in  Figure~\ref{subcase2.1} is of type $B$, so that only two situations are possible here, both presented in  Figure~\ref{subcase2.1}. Note that in any case,  $v_2\rightarrow v_k$ cannot be an edge. But then, because of the the orientation of the sector defined by the lines $V_x$ and $V_{x+1}$, the path $P$ can never reach $v_k$. Contradiction.

 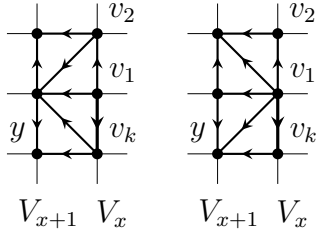
\begin{figure}[!htbp]
 \begin{center}
\begin{tikzpicture}[scale=0.8]
\def\dist{3}
\coordinate (BL) at (-1.5,-1.5) ; 
\coordinate (TR) at (0.5,1.5) ; 
\coordinate (Vx) at (-1.5,-2) ; 

\begin{scope}[every coordinate/.style={shift={(0,0)}}]
\draw[step=1cm,very thin] ([c]BL) grid ([c]TR); 
\path ([c]Vx) node[right] {$V_{x+1}$} ++(1.3,0) node[right] {$V_{x}$};

\foreach \x in {-1,...,0} 
 \foreach \y in {-1,...,1}
 { \fill[black!100] [c](\x,\y) circle(0.5ex);
 }
 \draw[very thick,directed] [c](0,0) node[above right] {$v_1$}-- ++(0,-1) node[above right] {$v_k$};
\draw[thick,directed] [c](0,0)-- ++(-1,0);
\draw[thick,directed] [c](0,0)-- ++(0,1);

\draw[thick,directed] [c](0,1) node[above right] {$v_2$}-- ++(-1,0);
\draw[thick,directed] [c](0,1)-- ++(-1,-1);

\draw[thick,directed] [c](0,-1)-- ++(-1,0);
\draw[thick,directed] [c](0,-1)-- ++(-1,1);

\draw[thick,directed] [c](-1,0)-- ++(0,1);
\draw[thick,directed] [c](-1,0)-- ++(0,-1)node[above left] {$y$};
\end{scope}
\begin{scope}[every coordinate/.style={shift={(\dist,0)}}]
\draw[step=1cm,very thin] ([c]BL) grid ([c]TR); 
\path ([c]Vx) node[right] {$V_{x+1}$} ++(1.3,0) node[right] {$V_{x}$};

\foreach \x in {-1,...,0} 
 \foreach \y in {-1,...,1}
 { \fill[black!100] [c](\x,\y) circle(0.5ex);
 }
 \draw[very thick,directed] [c](0,0) node[above right] {$v_1$}-- ++(0,-1) node[above right] {$v_k$};
\draw[thick,directed] [c](0,0)-- ++(-1,0);
\draw[thick,directed] [c](0,0)-- ++(0,1);
\draw[thick,directed] [c](0,0)-- ++(-1,1);
\draw[thick,directed] [c](0,0)-- ++(-1,-1);
\draw[thick,directed] [c](0,1) node[above right] {$v_2$}-- ++(-1,0);
\draw[thick,directed] [c](0,-1)-- ++(-1,0);
\draw[thick,directed] [c](-1,0)-- ++(0,1);
\draw[thick,directed] [c](-1,0)-- ++(0,-1)node[above left] {$y$};
\end{scope}
\end{tikzpicture}
\caption{\label{subcase2.1} Possibilities for Subcase 2.1}
\end{center}
\end{figure}

 {\bf{Subcase 2.2}}. Note that $v_1\rightarrow v_2$ cannot be a horizontal edge. Since $k\geq 4$, we consider four possible situations presented in Figure~\ref{subcase2.2}.
  \begin{itemize}
  \item[(1)] Suppose that $v_1\rightarrow v_2$ is as shown in the rightmost picture in Figure~\ref{subcase2.2}. Since the cell defined by $\{v_1,a,v_k,c\}$ is of type $B$, the edges $av_2$ and $av_k$ will receive opposite directions, so that $v_1\rightarrow v_k$ cannot be a shortcut in this situation.

  \item[(2)] If $v_1\rightarrow v_2$ is an edge as shown in the second picture in Figure~\ref{subcase2.2}, then we have also an edge $a\rightarrow b$, and because of that, $v_2\rightarrow a$ must also be an edge (otherwise, $P$ has no possibility to reach eventually $v_k$). However, if $a\rightarrow b$ is an edge, the cell defined by $\{c,v_1,a,v_k\}$ is of type $A$, and thus $ v_k\rightarrow a$ is an edge, and $P$ cannot be a shortcut (the vertices $v_1$, $v_2$, $a$ and $v_k$ do not define a shortcut).

  \item[(3)] Suppose that $v_1\rightarrow v_2$ and $v_2\rightarrow c$ are edges as shown in the third picture in Figure~\ref{subcase2.2}. Then the cell defined by $\{v_2,v_k,b,c\}$ is of type $A$ (which is reflected in Figure~\ref{subcase2.2}) and clearly $P$ will not reach $v_k$. Contradiction.

  \item[(4)] Finally, suppose that $v_1\rightarrow v_2$ and $c\rightarrow v_2$ are edges as shown in the fourth picture in Figure~\ref{subcase2.2}. Then the cell defined by $\{v_2,v_k,b,c\}$ is of type $B$ (which is reflected in Figure~\ref{subcase2.2}) and clearly $P$ will not reach $v_k$. Contradiction.

  \end{itemize}
  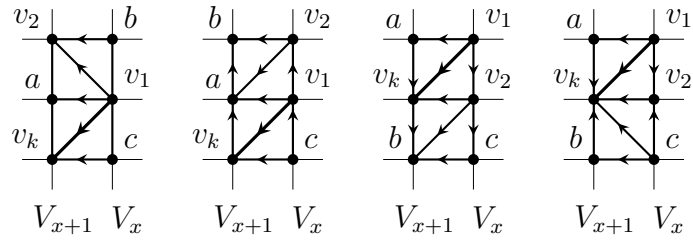
\begin{figure}[!htbp]
\begin{center}
\begin{tikzpicture}[scale=0.8]
\def\dist{3}
\coordinate (BL) at (-1.5,-1.5) ; 
\coordinate (TR) at (0.5,1.5) ; 
\coordinate (Vx) at (-1.5,-2) ; 
\coordinate (bln) at (-1,-1);
\coordinate (O) at (0,0);

\begin{scope}[every coordinate/.style={shift={(-1*\dist,0)}}]
\draw[step=1cm,very thin] ([c]BL) grid ([c]TR); 
\path ([c]Vx) node[right] {$V_{x+1}$} ++(1.3,0) node[right] {$V_{x}$};

\foreach \x in {-1,...,0} 
 \foreach \y in {-1,...,1}
 { \fill[black!100] [c](\x,\y) circle(0.5ex);
 }
 \draw[very thick,directed] [c](0,0) node[above right] {$v_1$}-- ++(-1,-1) node[above left] {$v_k$};
\draw[thick] [c](0,0) -- ++(0,-1)node[above right] {$c$};
\draw[thick,directed] [c](0,0)-- ++(-1,0);
\draw[thick,directed] [c](0,0)-- ++(-1,1);
\draw[thick] [c](0,0)-- ++(0,1);
\draw[thick,directed] [c](0,1) node[above right] {$b$}-- ++(-1,0) node[above left] {$v_2$};
\draw[thick,directed] [c](0,-1)-- ++(-1,0);
\draw[thick] [c](-1,0)node[above left] {$a$}-- ++(0,1);
\draw[thick] [c](-1,0)-- ++(0,-1);
\end{scope}

\begin{scope}[every coordinate/.style={shift={(0,0)}}]
\draw[step=1cm,very thin] ([c]BL) grid ([c]TR); 
\path ([c]Vx) node[right] {$V_{x+1}$} ++(1.3,0) node[right] {$V_{x}$};

\foreach \x in {-1,...,0} 
 \foreach \y in {-1,...,1}
 { \fill[black!100] [c](\x,\y) circle(0.5ex);
 }
 \draw[very thick,directed] [c](0,0) node[above right] {$v_1$}-- ++(-1,-1) node[above left] {$v_k$};
\draw[thick,reverse directed] [c](0,0) -- ++(0,-1)node[above right] {$c$};
\draw[thick,directed] [c](0,0)-- ++(-1,0);
\draw[thick,directed] [c](0,0)-- ++(0,1);
\draw[thick,directed] [c](0,1) node[above right] {$v_2$}-- ++(-1,0);
\draw[thick,directed] [c](0,1)-- ++(-1,-1);
\draw[thick,directed] [c](0,-1)-- ++(-1,0);
\draw[thick,directed] [c](-1,0)node[above left] {$a$}-- ++(0,1)node[above left] {$b$};
\draw[thick,reverse directed] [c](-1,0)-- ++(0,-1);
\end{scope}
\begin{scope}[every coordinate/.style={shift={(\dist,0)}}]
\draw[step=1cm,very thin] ([c]BL) grid ([c]TR); 
\path ([c]Vx) node[right] {$V_{x+1}$} ++(1.3,0) node[right] {$V_{x}$};
\foreach \x in {-1,...,0} 
 \foreach \y in {-1,...,1}
 { \fill[black!100] [c](\x,\y) circle(0.5ex);
 }
\draw[very thick,directed] [c](0,1) node[above right] {$v_1$}-- ++(-1,-1) node[above left] {$v_k$};
\draw[thick,directed] [c](0,1)-- ++(-1,0)node[above left] {$a$};
\draw[thick,directed] [c](0,1)-- ++(0,-1);
\draw[thick,directed][c](0,0) node[above right] {$v_2$}-- ++(0,-1);
\draw[thick,directed] [c](0,0)-- ++(-1,0);
\draw[thick,directed] [c](0,0)-- ++(-1,-1);
\draw[thick,directed] [c](0,-1)node[above right] {$c$}-- ++(-1,0);
\draw[thick,reverse directed] [c](-1,0)-- ++(0,1);
\draw[thick,directed] [c](-1,0)-- ++(0,-1)node[above left] {$b$};
\end{scope}
\begin{scope}[every coordinate/.style={shift={(2*\dist,0)}}]
\draw[step=1cm,very thin] ([c]BL) grid ([c]TR); 
\path ([c]Vx) node[right] {$V_{x+1}$} ++(1.3,0) node[right] {$V_{x}$};
\foreach \x in {-1,...,0} 
 \foreach \y in {-1,...,1}
 { \fill[black!100] [c](\x,\y) circle(0.5ex);
 }
\draw[very thick,directed] [c](0,1) node[above right] {$v_1$}-- ++(-1,-1) node[above left] {$v_k$};
\draw[thick,directed] [c](0,1)-- ++(-1,0)node[above left] {$a$};
\draw[thick,directed] [c](0,1)-- ++(0,-1);
 \draw[ thick,reverse directed] [c](0,0) node[above right] {$v_2$}-- ++(0,-1);
\draw[thick, directed] [c](0,0)-- ++(-1,0);
\draw[thick,directed] [c](0,-1)node[above right] {$c$}-- ++(-1,0);
\draw[thick,directed] [c](0,-1)-- ++(-1,1);
\draw[thick,reverse directed] [c](-1,0)-- ++(0,1);
\draw[thick,reverse directed] [c](-1,0)-- ++(0,-1)node[above left] {$b$};
\end{scope}
\end{tikzpicture}
\caption{\label{subcase2.2} Possibilities for Subcase 2.2}
\end{center}
\end{figure}

 {\bf Subcase 2.3.} For the rightmost picture in Figure~\ref{case2-figure}, we omit our arguments since they are very  similar to the arguments in Subcases 2.1 and 2.2.
\end{itemize}
The  lemma is proved.
\end{proof}

\section{Word-representable triangulations of GCCGs with three sectors}\label{sec4}

The goal of this section is to prove the following theorem.

\begin{thm}\label{thm-3} A triangulation of a GCCG with three sectors is word-repre-sentable if and only if it contains no graph in Figure~{\em \ref{non-representabiliy-of-6-graphs}} as an induced subgraph.\end{thm}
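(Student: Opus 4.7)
The plan is to prove both directions of the equivalence, closely mirroring the scheme from Section~\ref{sec3} but taking into account the additional obstructions that arise when $m=3$. For the ``only if'' direction, I would verify that each of the six graphs in Figure~\ref{non-representabiliy-of-6-graphs} is itself non-word-representable. By Theorem~\ref{semitra}, this reduces to showing that no acyclic orientation of any of these (small) graphs avoids a shortcut. For those among the six that contain $W_5$ or $W_7$ as an induced subgraph, non-word-representability is immediate from the known fact cited at the beginning of Section~\ref{sec3}; for the remaining ones, a short finite case analysis on the possible orientations of a few key edges suffices. I would additionally remark that each of these graphs is actually realizable as an induced subgraph of some triangulation of a 3-sector GCCG, so the list is sharp.

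For the ``if'' direction, assume that $T$ is a triangulation of $G_{3,n}$ containing none of the six forbidden graphs. The goal is to construct a semi-transitive orientation on $T$, adapted from the orientation $O$ of Section~\ref{sec3}. First I would establish a structural lemma in the spirit of Lemma~\ref{structure-lem}: under the forbidden-subgraph hypothesis, I expect to show that the distribution of cell types along each layer $L_i$ (for $i\geq 2$) is essentially constrained to a small list of patterns, and that any ``bad'' mixing of types on a layer forces one of the six graphs to appear. The main obstacle here, and the reason that merely forbidding $W_5$ and $W_7$ is insufficient, is that with only three sectors a pair of adjacent cells of differing types on $L_i$ no longer always creates a $W_5$ or $W_7$: because of the tight wrap-around, their common neighborhood can instead form a different obstruction, and identifying exactly these patterns as the remaining graphs in Figure~\ref{non-representabiliy-of-6-graphs} is the heart of the argument.

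Once the structural lemma is in hand, I would orient $T$ by the same three-step recipe as in Section~\ref{sec3}: fix a semi-transitive orientation on the 3-cycle $C_0$, orient all horizontal and diagonal edges inside a given sector consistently with the $C_0$-edge of that sector, and propagate the vertical edges layer by layer, reversing the direction when crossing a layer of type $B$ (using the layer types identified in the structural lemma). Acyclicity of the resulting orientation follows by the same argument as in Lemma~\ref{orient-semi-trans-lem}: any directed cycle would force every sector to be oriented consistently around the cylinder, contradicting our choice on $C_0$.

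The bulk of the remaining verification is to rule out shortcuts. I would reuse Case 1 and Subcases 2.1--2.3 from the proof of Lemma~\ref{orient-semi-trans-lem} essentially verbatim, since those arguments are local and only use two or three adjacent vertical lines. What is genuinely new in the 3-sector setting are the situations where a directed path can potentially wrap around the entire cylinder (since there are only three vertical lines, a path on two vertical lines is ``almost'' all of them). The hardest part of the proof, I expect, will be cataloguing these wrap-around scenarios and showing that each shortcut-producing configuration contains an induced copy of one of the six graphs, so that under our hypothesis it cannot occur. This step is where the six forbidden graphs must be used in full, and is what distinguishes the 3-sector case from the case $m\geq 4$ treated in Theorem~\ref{thm-morethan-3}.
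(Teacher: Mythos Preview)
Your plan for the ``only if'' direction is essentially what the paper does: it checks non-word-representability of the six graphs directly (by the branching method, illustrated in full on one of them). But your plan for the ``if'' direction is genuinely different from the paper's, and the difference is not cosmetic.

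The paper does \emph{not} attempt to adapt the global orientation $O$ of Section~\ref{sec3}. Instead it runs a finite-state induction on the number of layers. It first identifies the three non-isomorphic two-layer triangulations $M,N,P$ that avoid the six forbidden graphs, and exhibits five specific semi-transitive orientations $M_1,N_1,N_2,P_1,P_2$ of them (two each for $N$ and $P$). The inductive hypothesis is that the outer two layers, as an oriented graph, match one of these five patterns; the inductive step checks, for each of the (few) admissible one-layer extensions of each pattern, that the orientation extends and that the new outer two layers again match one of the five. No analogue of Lemma~\ref{structure-lem} is proved or used, and no global rule like $O$ is written down.

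The concrete obstacle to your approach is this: the paper explicitly needs \emph{two different} semi-transitive orientations of the same underlying two-layer graph $N$ (likewise $P$), and which one is required at a given stage depends on how the triangulation continues outward (the transition diagram is $N_1\to\{N_1,P_1\}$, $P_1\to\{N_2,P_2\}$, $N_2\to\{N_2,P_2\}$, $P_2\to\{N_1,P_1\}$). This is strong evidence that no single locally-determined orientation rule in the style of $O$ suffices for $m=3$: the ``right'' orientation of a layer is not a function of that layer and the one above it alone. Your recipe ``reverse the vertical direction when crossing a layer of type $B$'' presupposes exactly such a local rule. So even if you succeed in proving a structural lemma listing the admissible layer patterns, the second half of your plan---defining one orientation and then reusing the shortcut analysis of Lemma~\ref{orient-semi-trans-lem}---is where the argument would break, not in the wrap-around case analysis you flagged as hardest. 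The paper's state-passing induction is precisely what replaces this missing global rule.
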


Our proof is organized as follows. In Subsection~\ref{non-representabiliy-of-6-graphs} we will provide all six minimum non-word-representable graphs that can appear in triangulations of GCCGs with three sectors (see Figure~\ref{non-representabiliy-of-6-graphs}) and give an explicit proof that one of these graphs is non-word-representable. Then, in Subsection~\ref{inductive-arg-subsec}, we will give an inductive argument showing that avoidance of the six graphs in Figure~\ref{non-repr-induced-subgraphs} is a sufficient condition for a GCCG with three sectors to be word-representable.  Note that the graphs in Figure~\ref{non-repr-induced-subgraphs}  were obtained by an exhaustive computer search on graphs on up to eight vertices. However, our argument in Subsection~\ref{inductive-arg-subsec} will show that no other non-word-representable induced subgraphs can be found among all triangulations of GCCGs with three sectors.

\subsection{Non-word-representability of the graphs in Figure~\ref{non-repr-induced-subgraphs}}\label{non-representabiliy-of-6-graphs}

Non-word-representability of the graphs in Figure~\ref{non-repr-induced-subgraphs} can be checked using existing software~\cite{Glen}. However, there is a way to check this fact by hand using the branching approach, which is rather space consuming, and thus we will demonstrate this approach only on one example, the second graph in Figure~\ref{non-repr-induced-subgraphs}; the remaining cases can be checked similarly.

\begin{figure}[!htbp]
\begin{center}

\begin{tikzpicture}[scale=0.8,dot/.style={circle,inner sep=1.5pt,fill,name=#1}]
\node[dot=L10] at (-0.5, -0.289) {} ;
\node[dot=L12] at (0, 0.577) {} ;
\node[dot=L20] at (-1, -0.577) {} ;
\node[dot=L22] at (0, 1.154) {} ;
\node[dot=L30] at (-1.5, -0.867) {} ;
\node[dot=L21] at (1.5, -0.867) {} ;
\node[dot=L32] at (0, 1.731) {} ;
\draw[] (L12)-- (L10) ;
\draw[] (L10)-- (L20) ;
\draw[] (L20)-- (L21) ;
\draw[] (L10)-- (L21) ;
\draw[] (L21)-- (L22) ;
\draw[] (L22)-- (L20) ;
\draw[] (L12)-- (L22) ;
\draw[] (L10)-- (L22) ;
\draw[] (L20)-- (L30) ;
\draw[] (L22)-- (L30) ;
\draw[] (L32)-- (L30) ;
\draw[] (L22)-- (L32) ;
\draw[] (L21)-- (L32) ;
\end{tikzpicture}
\quad
\begin{tikzpicture}[scale=0.8,dot/.style={circle,inner sep=1.5pt,fill,name=#1}]
\node[dot=L10] at (-0.5, -0.289) {} ;
\node[dot=L11] at (0.5, -0.289) {} ;
\node[dot=L12] at (0, 0.577) {} ;
\node[dot=L22] at (0, 1.154) {} ;
\node[dot=L20] at (-1.5, -0.867) {} ;
\node[dot=L21] at (1.5, -0.867) {} ;
\node[dot=L32] at (0, 1.731) {} ;
\draw[] (L10)-- (L11) ;
\draw[] (L11)-- (L12) ;
\draw[] (L12)-- (L10) ;
\draw[] (L10)-- (L20) ;
\draw[] (L12)-- (L20) ;
\draw[] (L20)-- (L21) ;
\draw[] (L11)-- (L21) ;
\draw[] (L10)-- (L21) ;
\draw[] (L21)-- (L22) ;
\draw[] (L22)-- (L20) ;
\draw[] (L12)-- (L22) ;
\draw[] (L11)-- (L22) ;
\draw[] (L22)-- (L32) ;
\draw[] (L21)-- (L32) ;
\draw[] (L20)-- (L32) ;
\end{tikzpicture}
\quad
\begin{tikzpicture}[scale=0.8,dot/.style={circle,inner sep=1.5pt,fill,name=#1}]
\node[dot=L21] at (0.5, -0.289) {} ;
\node[dot=L12] at (0, 0.577) {} ;
\node[dot=L20] at (-1, -0.577) {} ;
\node[dot=L31] at (1, -0.577) {} ;
\node[dot=L22] at (0, 1.154) {} ;
\node[dot=L30] at (-1.5, -0.867) {} ;
\node[dot=L41] at (1.5, -0.867) {} ;
\node[dot=L32] at (0, 1.731) {} ;
\draw[] (L12)-- (L20) ;
\draw[] (L20)-- (L21) ;
\draw[] (L21)-- (L22) ;
\draw[] (L22)-- (L20) ;
\draw[] (L12)-- (L22) ;
\draw[] (L20)-- (L30) ;
\draw[] (L30)-- (L31) ;
\draw[] (L21)-- (L31) ;
\draw[] (L20)-- (L31) ;
\draw[] (L31)-- (L32) ;
\draw[] (L32)-- (L30) ;
\draw[] (L22)-- (L32) ;
\draw[] (L21)-- (L32) ;
\draw[] (L20)-- (L32) ;
\draw[] (L31)-- (L41) ;
\draw[] (L32)-- (L41) ;
\end{tikzpicture}

\begin{tikzpicture}[scale=0.8,dot/.style={circle,inner sep=1.5pt,fill,name=#1}]
\node[dot=L10] at (-0.5, -0.289) {} ;
\node[dot=L22] at (0, 0.577) {} ;
\node[dot=L20] at (-1, -0.577) {} ;
\node[dot=L21] at (1, -0.577) {} ;
\node[dot=L32] at (0, 1.154) {} ;
\node[dot=L30] at (-1.5, -0.867) {} ;
\node[dot=L31] at (1.5, -0.867) {} ;
\node[dot=L42] at (0, 1.731) {} ;
\draw[] (L10)-- (L20) ;
\draw[] (L20)-- (L21) ;
\draw[] (L10)-- (L21) ;
\draw[] (L21)-- (L22) ;
\draw[] (L22)-- (L20) ;
\draw[] (L10)-- (L22) ;
\draw[] (L20)-- (L30) ;
\draw[] (L30)-- (L31) ;
\draw[] (L21)-- (L31) ;
\draw[] (L20)-- (L31) ;
\draw[] (L31)-- (L32) ;
\draw[] (L32)-- (L30) ;
\draw[] (L22)-- (L32) ;
\draw[] (L21)-- (L32) ;
\draw[] (L20)-- (L32) ;
\draw[] (L32)-- (L42) ;
\draw[] (L30)-- (L42) ;
\end{tikzpicture}
\quad
\begin{tikzpicture}[scale=0.8,dot/.style={circle,inner sep=1.5pt,fill,name=#1}]
\node[dot=L10] at (-0.5, -0.289) {} ;
\node[dot=L11] at (0.5, -0.289) {} ;
\node[dot=L20] at (-1, -0.577) {} ;
\node[dot=L21] at (1, -0.577) {} ;
\node[dot=L22] at (0, 1.154) {} ;
\node[dot=L30] at (-1.5, -0.867) {} ;
\node[dot=L31] at (1.5, -0.867) {} ;
\node[dot=L32] at (0, 1.731) {} ;
\draw[] (L10)-- (L11) ;
\draw[] (L10)-- (L20) ;
\draw[] (L20)-- (L21) ;
\draw[] (L11)-- (L21) ;
\draw[] (L10)-- (L21) ;
\draw[] (L21)-- (L22) ;
\draw[] (L22)-- (L20) ;
\draw[] (L11)-- (L22) ;
\draw[] (L10)-- (L22) ;
\draw[] (L20)-- (L30) ;
\draw[] (L21)-- (L30) ;
\draw[] (L30)-- (L31) ;
\draw[] (L21)-- (L31) ;
\draw[] (L31)-- (L32) ;
\draw[] (L32)-- (L30) ;
\draw[] (L22)-- (L32) ;
\draw[] (L21)-- (L32) ;
\draw[] (L20)-- (L32) ;
\end{tikzpicture}
\quad
\begin{tikzpicture}[scale=0.8,dot/.style={circle,inner sep=1.5pt,fill,name=#1}]
\node[dot=L11] at (0.5, -0.289) {} ;
\node[dot=L12] at (0, 0.577) {} ;
\node[dot=L20] at (-1, -0.577) {} ;
\node[dot=L21] at (1, -0.577) {} ;
\node[dot=L22] at (0, 1.154) {} ;
\node[dot=L40] at (-1.5, -0.867) {} ;
\node[dot=L31] at (1.5, -0.867) {} ;
\node[dot=L32] at (0, 1.731) {} ;
\draw[] (L11)-- (L12) ;
\draw[] (L12)-- (L20) ;
\draw[] (L20)-- (L21) ;
\draw[] (L11)-- (L21) ;
\draw[] (L21)-- (L22) ;
\draw[] (L22)-- (L20) ;
\draw[] (L12)-- (L22) ;
\draw[] (L11)-- (L22) ;
\draw[] (L21)-- (L31) ;
\draw[] (L20)-- (L31) ;
\draw[] (L31)-- (L32) ;
\draw[] (L22)-- (L32) ;
\draw[] (L21)-- (L32) ;
\draw[] (L20)-- (L32) ;
\draw[] (L31)-- (L40) ;
\draw[] (L32)-- (L40) ;

\end{tikzpicture}
\caption{All minimal non-word-representable induced sugraphs in triangulations of GCCG's with three sectors}\label{non-repr-induced-subgraphs}
\end{center}
\end{figure}

Note that for any of the partial orientations of the 3- or 4-cycles
given in Figure~\ref{rules}, there is a unique way of completing
these orientations, also shown in  Figure~\ref{rules}, so that oriented cycles and shortcuts are avoided. This stays true in the context of triangulated 4-cycles, because such graphs are different from $K_{4}$ admitting an alternative semi-transitive (in fact, transitive) orientation completion.

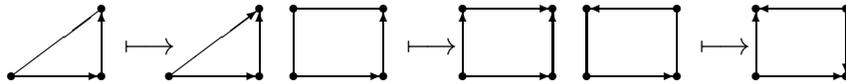
\begin{figure}[h]
\begin{center}
\setlength{\unitlength}{3mm}
\begin{picture}(5,4.5)
\put(-16,0){


%

\put(0,0){\p}
\put(4,0){\p}
\put(4,3){\p}

\put(0,0){\vector(1,0){3.9}}
\put(4,0){\vector(0,1){2.9}}
\put(0,0){\line(4,3){3.9}}
\put(5,1){$\longmapsto$}

}
\put(-9,0){

\put(0,0){\p}
\put(4,0){\p}
\put(4,3){\p}

\put(0,0){\vector(1,0){3.9}}
\put(4,0){\vector(0,1){2.9}}
\put(0,0){\vector(4,3){3.9}}

}

\put(-3.5,0){

\put(4,0){\p}
\put(4,3){\p}
\put(0,3){\p}
\put(0,0){\p}

\put(0,0){\vector(1,0){3.9}}
\put(4,0){\vector(0,1){2.9}}
\put(0,0){\line(0,1){2.9}}
\put(0,3){\line(1,0){3.9}}

\put(5,1){$\longmapsto$}
}

\put(4,0){


\put(4,0){\p}
\put(4,3){\p}
\put(0,3){\p}
\put(0,0){\p}

\put(0,0){\vector(1,0){3.9}}
\put(4,0){\vector(0,1){2.9}}
\put(0,0){\vector(0,1){2.9}}
\put(0,3){\vector(1,0){3.9}}

}

\put(9.5,0){

\put(4,0){\p}
\put(4,3){\p}
\put(0,3){\p}
\put(0,0){\p}

\put(0,0){\vector(1,0){3.9}}
\put(4,0){\line(0,1){2.9}}
\put(0,0){\line(0,1){2.9}}
\put(4,3){\vector(-1,0){3.9}}

\put(5,1){$\longmapsto$}
}

\put(17,0){

\put(4,0){\p}
\put(4,3){\p}
\put(0,3){\p}
\put(0,0){\p}

\put(0,0){\vector(1,0){3.9}}
\put(4,3){\vector(0,-1){2.9}}
\put(0,0){\vector(0,1){2.9}}
\put(4,3){\vector(-1,0){3.9}}

}

\end{picture}


\caption{Unique way of completing in a semi-transitive\index{semi-transitive orientation} way partial orientations of a 3-cycle or a 4-cycle}
\label{rules}
\end{center}
\end{figure}

Below,  we use the following terminology introduced in~\cite{Akrobotu}. \emph{Complete XYW(Z)} refers to completing the orientations on a cycle $XYW(Z)$ according to the respective cases in Figure \ref{rules}. Instances in which it is not possible
to uniquely determine orientations of any additional edges in a partially\index{partially oriented graph} oriented graph
are referred to as \emph{Branching XY}. Here, one picks a new, still non-oriented edge
$XY$ of the graph and assigns the orientation $X\rightarrow Y$, while, at the
same time, one makes a copy of the graph, respectively, with its partial orientations
and assigns orientation $Y\rightarrow X$ to the edge $XY$. The new copy is named and examined
later on. Our terminology and relevant abbreviations are
summarized in Table~\ref{table1}.

\begin{table}[h!]
\centering
\begin{tabular}{|c|l|}
\hline
{\bf Abbreviation} & {\bf Operation} \\
\hline
B & Branch \\
NC & Obtain a new partially oriented copy \\
C & Complete \\
MC & Move to a copy \\
S & Obtain a shortcut\index{shortcut} \\

\hline
\end{tabular}
\caption{List of used operations and their abbreviations}
\label{table1}
\end{table}

Name $A$ the first copy of the second graph in Figure~\ref{non-repr-induced-subgraphs} with the single edge orientated as $1\rightarrow 3$, and carry out the following operations, where the  partially oriented graphs $A$ -- $L$ are given in Figure~\ref{12-partially-orient-graphs}. We will show that in case of any acyclic orientation of the graph, a shortcut is inventible.

\begin{itemize}

\item B 37 (NC $B$), C 137, C 1376, B 65 (NC $C$), C 1654, C 1764, C 4576, C 3752, C 2564, C 2457, C 123, S 1324;

\item MC $C$, C 567, C 5672, B 45 (NC $D$), C 1654, C456, C 1452, C 452, C 2573, S 4132;

\item MC $D$, C 1654, C 1452, C 2754, C 2413, S 5237;

\item MC $B$, B 16 (NC $E$), C 6137, B 17 (NC $F$), C 1764, B 45 (NC $G$), C 1452, C 6145, C 765, C 7652, C 2573, C 2564, S 1423;

\item MC $G$, C 546, C 5417, C 6457, C 5732, C 1452, C 2564, S 1423;

\item MC $F$, C 7165, B 25 (NC $H$), C 2567, C 273, C 2754, C 1654, C 2541, C 456, S 4271;

\item MC $H$, C 2573, C 132, C 732, C 1324, C 4125, C 4576, S 1642;

\item MC $E$, C 7316, B 12 (NC $I$), C 6125, C 7652, C 6524, B 17 (NC $J$), C 1764, C 4125, C 5467, C 2573, S 1423;

\item MC $J$, C 1764, C 4125, C 5467, C 3752, S 4132;

\item MC $I$, C 213, B 14 (NC $K$), C 214, C 614, C 6145, C 7652, C 7325, C 7561, S 2714;

\item MC $K$, C 4132, B 25 (NC $L$), C 5214, C 6145, C 7652, C 1427, C 1754, S 2573;

\item MC $L$, C 4125, C 6145, C 7652, C 5237, C 7541, C 4576, S 6427.

\end{itemize}

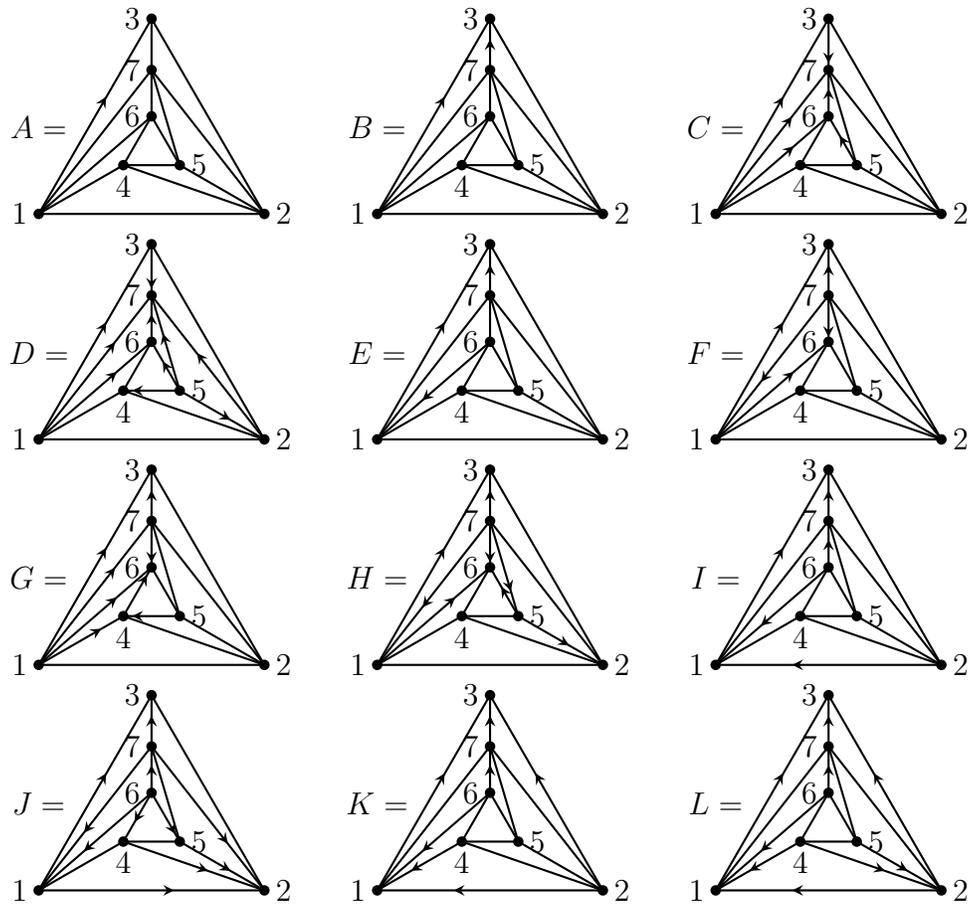
\begin{figure}[!htbp]
\begin{center}
\begin{tikzpicture}[scale=1.5,dot/.style={circle,inner sep=1.4pt,fill}]
\coordinate (a) at (-1,0);
\coordinate (L10) at (-0.5/2, -0.289/2) ;
\coordinate (L11) at (0.5/2, -0.289/2);
\coordinate (L12) at (0, 0.577/2);
\coordinate (L20) at (-1, -0.577);
\coordinate (L21) at (1, -0.577);
\coordinate (L22) at (0, 0.7);
\coordinate (L32) at (0, 1.154);

\begin{scope}[every coordinate/.style={shift={(0,0)}}]
\node[above] at ([c]a) {$A=$};
\draw[thick] ([c]L10) node[dot]{}-- ([c]L11) ;
\draw[thick] ([c]L11)node[dot]{}-- ([c]L12) ;
\draw[thick] ([c]L12)node[dot]{}-- ([c]L10) ;
\draw[thick] ([c]L10)node[below]{4}-- ([c]L20) ;
\draw[thick] ([c]L12)node[left]{6}-- ([c]L20) ;
\draw[thick] ([c]L20)node[dot]{}-- ([c]L21) ;
\draw[thick] ([c]L11)node[right]{5}-- ([c]L21) ;
\draw[thick] ([c]L10)-- ([c]L21) ;
\draw[thick] ([c]L21)node[dot]{}-- ([c]L22) ;
\draw[thick] ([c]L22)node[dot]{}-- ([c]L20) ;
\draw[thick] ([c]L12)-- ([c]L22) ;
\draw[thick] ([c]L11)-- ([c]L22) ;
\draw[thick] ([c]L22)node[left]{7}-- ([c]L32)node[dot]{} ;
\draw[thick] ([c]L21)node[right]{2}-- ([c]L32)node[left]{3} ;
\draw[thick,directed] ([c]L20)node[left]{1}-- ([c]L32) ;
\end{scope}

\begin{scope}[every coordinate/.style={shift={(3,0)}}]
\node[above] at ([c]a) {$B=$};
\draw[thick] ([c]L10) node[dot]{}-- ([c]L11) ;
\draw[thick] ([c]L11)node[dot]{}-- ([c]L12) ;
\draw[thick] ([c]L12)node[dot]{}-- ([c]L10) ;
\draw[thick] ([c]L10)node[below]{4}-- ([c]L20) ;
\draw[thick] ([c]L12)node[left]{6}-- ([c]L20) ;
\draw[thick] ([c]L20)node[dot]{}-- ([c]L21) ;
\draw[thick] ([c]L11)node[right]{5}-- ([c]L21) ;
\draw[thick] ([c]L10)-- ([c]L21) ;
\draw[thick] ([c]L21)node[dot]{}-- ([c]L22) ;
\draw[thick] ([c]L22)node[dot]{}-- ([c]L20) ;
\draw[thick] ([c]L12)-- ([c]L22) ;
\draw[thick] ([c]L11)-- ([c]L22) ;
\draw[thick,directed] ([c]L22)node[left]{7}-- ([c]L32)node[dot]{} ;
\draw[thick] ([c]L21)node[right]{2}-- ([c]L32)node[left]{3} ;
\draw[thick,directed] ([c]L20)node[left]{1}-- ([c]L32) ;
\end{scope}

\begin{scope}[every coordinate/.style={shift={(6,0)}}]
\node[above] at ([c]a) {$C=$};
\draw[thick] ([c]L10) node[dot]{}-- ([c]L11) ;
\draw[thick, directed] ([c]L11)node[dot]{}-- ([c]L12) ;
\draw[thick] ([c]L12)node[dot]{}-- ([c]L10) ;
\draw[thick] ([c]L10)node[below]{4}-- ([c]L20) ;
\draw[thick,reverse directed] ([c]L12)node[left]{6}-- ([c]L20) ;
\draw[thick] ([c]L20)node[dot]{}-- ([c]L21) ;
\draw[thick] ([c]L11)node[right]{5}-- ([c]L21) ;
\draw[thick] ([c]L10)-- ([c]L21) ;
\draw[thick] ([c]L21)node[dot]{}-- ([c]L22) ;
\draw[thick,reverse directed] ([c]L22)node[dot]{}-- ([c]L20) ;
\draw[thick, directed] ([c]L12)-- ([c]L22) ;
\draw[thick] ([c]L11)-- ([c]L22) ;
\draw[thick,reverse directed] ([c]L22)node[left]{7}-- ([c]L32)node[dot]{} ;
\draw[thick] ([c]L21)node[right]{2}-- ([c]L32)node[left]{3} ;
\draw[thick,directed] ([c]L20)node[left]{1}-- ([c]L32) ;
\end{scope}

\begin{scope}[every coordinate/.style={shift={(0,-2)}}]
\node[above] at ([c]a) {$D=$};
\draw[thick,reverse directed] ([c]L10) node[dot]{}-- ([c]L11) ;
\draw[thick, directed] ([c]L11)node[dot]{}-- ([c]L12) ;
\draw[thick] ([c]L12)node[dot]{}-- ([c]L10) ;
\draw[thick] ([c]L10)node[below]{4}-- ([c]L20) ;
\draw[thick,reverse directed] ([c]L12)node[left]{6}-- ([c]L20) ;
\draw[thick] ([c]L20)node[dot]{}-- ([c]L21) ;
\draw[thick,directed] ([c]L11)node[right]{5}-- ([c]L21) ;
\draw[thick] ([c]L10)-- ([c]L21) ;
\draw[thick,directed] ([c]L21)node[dot]{}-- ([c]L22) ;
\draw[thick,reverse directed] ([c]L22)node[dot]{}-- ([c]L20) ;
\draw[thick, directed] ([c]L12)-- ([c]L22) ;
\draw[thick,directed] ([c]L11)-- ([c]L22) ;
\draw[thick,reverse directed] ([c]L22)node[left]{7}-- ([c]L32)node[dot]{} ;
\draw[thick] ([c]L21)node[right]{2}-- ([c]L32)node[left]{3} ;
\draw[thick,directed] ([c]L20)node[left]{1}-- ([c]L32) ;
\end{scope}

\begin{scope}[every coordinate/.style={shift={(3,-2)}}]
\node[above] at ([c]a) {$E=$};
\draw[thick] ([c]L10) node[dot]{}-- ([c]L11) ;
\draw[thick] ([c]L11)node[dot]{}-- ([c]L12) ;
\draw[thick] ([c]L12)node[dot]{}-- ([c]L10) ;
\draw[thick] ([c]L10)node[below]{4}-- ([c]L20) ;
\draw[thick,directed] ([c]L12)node[left]{6}-- ([c]L20) ;
\draw[thick] ([c]L20)node[dot]{}-- ([c]L21) ;
\draw[thick] ([c]L11)node[right]{5}-- ([c]L21) ;
\draw[thick] ([c]L10)-- ([c]L21) ;
\draw[thick] ([c]L21)node[dot]{}-- ([c]L22) ;
\draw[thick] ([c]L22)node[dot]{}-- ([c]L20) ;
\draw[thick] ([c]L12)-- ([c]L22) ;
\draw[thick] ([c]L11)-- ([c]L22) ;
\draw[thick,directed] ([c]L22)node[left]{7}-- ([c]L32)node[dot]{} ;
\draw[thick] ([c]L21)node[right]{2}-- ([c]L32)node[left]{3} ;
\draw[thick,directed] ([c]L20)node[left]{1}-- ([c]L32) ;
\end{scope}

\begin{scope}[every coordinate/.style={shift={(6,-2)}}]
\node[above] at ([c]a) {$F=$};
\draw[thick] ([c]L10) node[dot]{}-- ([c]L11) ;
\draw[thick] ([c]L11)node[dot]{}-- ([c]L12) ;
\draw[thick] ([c]L12)node[dot]{}-- ([c]L10) ;
\draw[thick] ([c]L10)node[below]{4}-- ([c]L20) ;
\draw[thick,reverse directed] ([c]L12)node[left]{6}-- ([c]L20) ;
\draw[thick] ([c]L20)node[dot]{}-- ([c]L21) ;
\draw[thick] ([c]L11)node[right]{5}-- ([c]L21) ;
\draw[thick] ([c]L10)-- ([c]L21) ;
\draw[thick] ([c]L21)node[dot]{}-- ([c]L22) ;
\draw[thick,directed] ([c]L22)node[dot]{}-- ([c]L20) ;
\draw[thick,reverse directed] ([c]L12)-- ([c]L22) ;
\draw[thick] ([c]L11)-- ([c]L22) ;
\draw[thick,directed] ([c]L22)node[left]{7}-- ([c]L32)node[dot]{} ;
\draw[thick] ([c]L21)node[right]{2}-- ([c]L32)node[left]{3} ;
\draw[thick,directed] ([c]L20)node[left]{1}-- ([c]L32) ;
\end{scope}

\begin{scope}[every coordinate/.style={shift={(0,-4)}}]
\node[above] at ([c]a) {$G=$};
\draw[thick,reverse directed] ([c]L10) node[dot]{}-- ([c]L11) ;
\draw[thick] ([c]L11)node[dot]{}-- ([c]L12) ;
\draw[thick,reverse directed] ([c]L12)node[dot]{}-- ([c]L10) ;
\draw[thick,reverse directed] ([c]L10)node[below]{4}-- ([c]L20) ;
\draw[thick,reverse directed] ([c]L12)node[left]{6}-- ([c]L20) ;
\draw[thick] ([c]L20)node[dot]{}-- ([c]L21) ;
\draw[thick] ([c]L11)node[right]{5}-- ([c]L21) ;
\draw[thick] ([c]L10)-- ([c]L21) ;
\draw[thick] ([c]L21)node[dot]{}-- ([c]L22) ;
\draw[thick,reverse directed] ([c]L22)node[dot]{}-- ([c]L20) ;
\draw[thick,reverse directed] ([c]L12)-- ([c]L22) ;
\draw[thick] ([c]L11)-- ([c]L22) ;
\draw[thick,directed] ([c]L22)node[left]{7}-- ([c]L32)node[dot]{} ;
\draw[thick] ([c]L21)node[right]{2}-- ([c]L32)node[left]{3} ;
\draw[thick,directed] ([c]L20)node[left]{1}-- ([c]L32) ;
\end{scope}

\begin{scope}[every coordinate/.style={shift={(3,-4)}}]
\node[above] at ([c]a) {$H=$};
\draw[thick] ([c]L10) node[dot]{}-- ([c]L11) ;
\draw[thick,directed] ([c]L11)node[dot]{}-- ([c]L12) ;
\draw[thick] ([c]L12)node[dot]{}-- ([c]L10) ;
\draw[thick] ([c]L10)node[below]{4}-- ([c]L20) ;
\draw[thick,reverse directed] ([c]L12)node[left]{6}-- ([c]L20) ;
\draw[thick] ([c]L20)node[dot]{}-- ([c]L21) ;
\draw[thick,directed] ([c]L11)node[right]{5}-- ([c]L21) ;
\draw[thick] ([c]L10)-- ([c]L21) ;
\draw[thick] ([c]L21)node[dot]{}-- ([c]L22) ;
\draw[thick,directed] ([c]L22)node[dot]{}-- ([c]L20) ;
\draw[thick,reverse directed] ([c]L12)-- ([c]L22) ;
\draw[thick,reverse directed] ([c]L11)-- ([c]L22) ;
\draw[thick,directed] ([c]L22)node[left]{7}-- ([c]L32)node[dot]{} ;
\draw[thick] ([c]L21)node[right]{2}-- ([c]L32)node[left]{3} ;
\draw[thick,directed] ([c]L20)node[left]{1}-- ([c]L32) ;
\end{scope}

\begin{scope}[every coordinate/.style={shift={(6,-4)}}]
\node[above] at ([c]a) {$I=$};
\draw[thick] ([c]L10) node[dot]{}-- ([c]L11) ;
\draw[thick] ([c]L11)node[dot]{}-- ([c]L12) ;
\draw[thick] ([c]L12)node[dot]{}-- ([c]L10) ;
\draw[thick] ([c]L10)node[below]{4}-- ([c]L20) ;
\draw[thick,directed] ([c]L12)node[left]{6}-- ([c]L20) ;
\draw[thick,reverse directed] ([c]L20)node[dot]{}-- ([c]L21) ;
\draw[thick] ([c]L11)node[right]{5}-- ([c]L21) ;
\draw[thick] ([c]L10)-- ([c]L21) ;
\draw[thick] ([c]L21)node[dot]{}-- ([c]L22) ;
\draw[thick] ([c]L22)node[dot]{}-- ([c]L20) ;
\draw[thick,directed] ([c]L12)-- ([c]L22) ;
\draw[thick] ([c]L11)-- ([c]L22) ;
\draw[thick,directed] ([c]L22)node[left]{7}-- ([c]L32)node[dot]{} ;
\draw[thick] ([c]L21)node[right]{2}-- ([c]L32)node[left]{3} ;
\draw[thick,directed] ([c]L20)node[left]{1}-- ([c]L32) ;
\end{scope}

\begin{scope}[every coordinate/.style={shift={(0,-6)}}]
\node[above] at ([c]a) {$J=$};
\draw[thick] ([c]L10) node[dot]{}-- ([c]L11) ;
\draw[thick,reverse directed] ([c]L11)node[dot]{}-- ([c]L12) ;
\draw[thick,directed] ([c]L12)node[dot]{}-- ([c]L10) ;
\draw[thick] ([c]L10)node[below]{4}-- ([c]L20) ;
\draw[thick,directed] ([c]L12)node[left]{6}-- ([c]L20) ;
\draw[thick,directed] ([c]L20)node[dot]{}-- ([c]L21) ;
\draw[thick,directed] ([c]L11)node[right]{5}-- ([c]L21) ;
\draw[thick,directed] ([c]L10)-- ([c]L21) ;
\draw[thick,reverse directed] ([c]L21)node[dot]{}-- ([c]L22) ;
\draw[thick,directed] ([c]L22)node[dot]{}-- ([c]L20) ;
\draw[thick,directed] ([c]L12)-- ([c]L22) ;
\draw[thick] ([c]L11)-- ([c]L22) ;
\draw[thick,directed] ([c]L22)node[left]{7}-- ([c]L32)node[dot]{} ;
\draw[thick] ([c]L21)node[right]{2}-- ([c]L32)node[left]{3} ;
\draw[thick,directed] ([c]L20)node[left]{1}-- ([c]L32) ;
\end{scope}

\begin{scope}[every coordinate/.style={shift={(3,-6)}}]
\node[above] at ([c]a) {$K=$};
\draw[thick] ([c]L10) node[dot]{}-- ([c]L11) ;
\draw[thick] ([c]L11)node[dot]{}-- ([c]L12) ;
\draw[thick] ([c]L12)node[dot]{}-- ([c]L10) ;
\draw[thick,directed] ([c]L10)node[below]{4}-- ([c]L20) ;
\draw[thick,directed] ([c]L12)node[left]{6}-- ([c]L20) ;
\draw[thick,reverse directed] ([c]L20)node[dot]{}-- ([c]L21) ;
\draw[thick] ([c]L11)node[right]{5}-- ([c]L21) ;
\draw[thick] ([c]L10)-- ([c]L21) ;
\draw[thick] ([c]L21)node[dot]{}-- ([c]L22) ;
\draw[thick] ([c]L22)node[dot]{}-- ([c]L20) ;
\draw[thick,directed] ([c]L12)-- ([c]L22) ;
\draw[thick] ([c]L11)-- ([c]L22) ;
\draw[thick,directed] ([c]L22)node[left]{7}-- ([c]L32)node[dot]{} ;
\draw[thick,directed] ([c]L21)node[right]{2}-- ([c]L32)node[left]{3} ;
\draw[thick,directed] ([c]L20)node[left]{1}-- ([c]L32) ;
\end{scope}

\begin{scope}[every coordinate/.style={shift={(6,-6)}}]
\node[above] at ([c]a) {$L=$};
\draw[thick] ([c]L10) node[dot]{}-- ([c]L11) ;
\draw[thick] ([c]L11)node[dot]{}-- ([c]L12) ;
\draw[thick] ([c]L12)node[dot]{}-- ([c]L10) ;
\draw[thick,directed] ([c]L10)node[below]{4}-- ([c]L20) ;
\draw[thick,directed] ([c]L12)node[left]{6}-- ([c]L20) ;
\draw[thick,reverse directed] ([c]L20)node[dot]{}-- ([c]L21) ;
\draw[thick, directed] ([c]L11)node[right]{5}-- ([c]L21) ;
\draw[thick,directed] ([c]L10)-- ([c]L21) ;
\draw[thick] ([c]L21)node[dot]{}-- ([c]L22) ;
\draw[thick] ([c]L22)node[dot]{}-- ([c]L20) ;
\draw[thick,directed] ([c]L12)-- ([c]L22) ;
\draw[thick] ([c]L11)-- ([c]L22) ;
\draw[thick, directed] ([c]L22)node[left]{7}-- ([c]L32)node[dot]{} ;
\draw[thick,directed] ([c]L21)node[right]{2}-- ([c]L32)node[left]{3} ;
\draw[thick,directed] ([c]L20)node[left]{1}-- ([c]L32) ;
\end{scope}
\end{tikzpicture}
\caption{Partial orientations of the second graph in Figure~\ref{non-repr-induced-subgraphs}}\label{12-partially-orient-graphs}
\end{center}
\end{figure}

\subsection{An inductive argument proving Theorem~\ref{thm-3}}\label{inductive-arg-subsec}

To show that avoidance of the six graphs in Figure~\ref{non-repr-induced-subgraphs} as induced subgraphs is a sufficient condition for a GCCG with three sectors to be word-representable, we use the following approach.

Each triangulation $T$ of a GCCG having $i$ layers and no graph in Figure~\ref{non-repr-induced-subgraphs} as an induced subgraph is obtained from  a triangulation $T_0$ of a GCCG  having $i-1$ layers and no graph in Figure~\ref{non-repr-induced-subgraphs} as an induced subgraph by adding a new external layer $L_i$. Since the graphs in Figure~\ref{non-repr-induced-subgraphs} involve vertices from three layers (that is, four levels), to obtain all possible $T$, we need to control the two external layers (layers $L_{i-1}$ and $L_{i-2}$) in $T_0$. Further, if we assume existence of a semi-transitive orientation of $T_0$, which induces a semi-transitive orientation on  $L_{i-1}$ and $L_{i-2}$, we could try to extend such an orientation to a semi-transitive orientation of $T$ (making sure that no cycles or shortcuts emerge).

The next step is to compare the directed graphs induced by the layers $\{L_{i-1},L_{i}\}$ and $\{L_{i-2},L_{i-1}\}$. If these are the same directed graphs, then an inductive argument can be applied to extending the graph by new layers and proving that in each case a semi-transitive orientation exists giving word-representability by Theorem~\ref{semitra}. On the other hand, if  the graphs induced by the layers $\{L_{i-1},L_{i}\}$ and $\{L_{i-2},L_{i-1}\}$ are different, we need to replace the oriented layers $\{L_{i-2},L_{i-1}\}$ by  $\{L_{i-1},L_{i}\}$ and repeat the procedure described above again. Namely, we need to extend the graph by another layer $L_{i+1}$, then try to extend the existing semi-transitive orientation to this layer, and compare two external layers $\{L_{i},L_{i+1}\}$ with already considered orientations of two external layers with a hope to meet the same directed graph.

\begin{figure}[!htbp]
\begin{center}
\begin{tikzpicture}[scale=0.5,dot/.style={circle,inner sep=2pt,fill,name=#1}]
\def\dist{3.5}
\def\xa{4.5}
\def\xb{5.5}
\def\y{2}
\coordinate (Arrow) at (1.5, 0);
\coordinate (lab) at (0,-1.3) ; 
\coordinate (L10) at (-0.5, -0.289);
\coordinate (L11) at (0.5, -0.289);
\coordinate (L12) at (0, 0.577);
\coordinate (L20) at (-1, -0.577);
\coordinate (L21) at (1, -0.577);
\coordinate (L22) at (0, 1.154);
\coordinate (L30) at (-1.5, -0.867);
\coordinate (L31) at (1.5, -0.867);
\coordinate (L32) at (0, 1.731);
\coordinate (L40) at (-2., -1.156);
\coordinate (L41) at (2., -1.156);
\coordinate (L42) at (0, 2.308);


\begin{scope}[every coordinate/.style={shift={(0,0)}}]
\node at ([c]lab) {$M$};
\fill[black!100] ([c]L10) circle(0.5ex)
		 ([c]L11) circle(0.5ex)
		 ([c]L12) circle(0.5ex)
		 ([c]L20) circle(0.5ex)
		 ([c]L21) circle(0.5ex)
		 ([c]L22) circle(0.5ex)
		 ([c]L30) circle(0.5ex)
		 ([c]L31) circle(0.5ex)
		 ([c]L32) circle(0.5ex)
		 ;
\draw[->] ([c]Arrow)--+(0.7,0);
\draw[] ([c]L10)-- ([c]L11) ;
\draw[] ([c]L11)-- ([c]L12) ;
\draw[] ([c]L12)-- ([c]L10) ;
\draw[] ([c]L20)-- ([c]L21) ;
\draw[] ([c]L21)-- ([c]L22) ;
\draw[] ([c]L22)-- ([c]L20) ;
\draw[] ([c]L30)-- ([c]L31) ;
\draw[] ([c]L31)-- ([c]L32) ;
\draw[] ([c]L32)-- ([c]L30) ;
\draw[] ([c]L10)-- ([c]L20) ;
\draw[] ([c]L11)-- ([c]L21) ;
\draw[] ([c]L12)-- ([c]L22) ;
\draw[] ([c]L20)-- ([c]L30) ;
\draw[] ([c]L21)-- ([c]L31) ;
\draw[] ([c]L22)-- ([c]L32) ;
\draw[] ([c]L10)-- ([c]L21) ;
\draw[] ([c]L11)-- ([c]L22) ;
\draw[] ([c]L12)-- ([c]L20) ;
\draw[] ([c]L20)-- ([c]L31) ;
\draw[] ([c]L21)-- ([c]L32) ;
\draw[] ([c]L22)-- ([c]L30) ;
\end{scope}

\begin{scope}[every coordinate/.style={shift={(\xa,0)}}]
\fill[black!100] ([c]L10) circle(0.5ex)
		 ([c]L11) circle(0.5ex)
		 ([c]L12) circle(0.5ex)
		 ([c]L20) circle(0.5ex)
		 ([c]L21) circle(0.5ex)
		 ([c]L22) circle(0.5ex)
		 ([c]L30) circle(0.5ex)
		 ([c]L31) circle(0.5ex)
		 ([c]L32) circle(0.5ex)
		 ([c]L40) circle(0.5ex)
		 ([c]L41) circle(0.5ex)
		 ([c]L42) circle(0.5ex)
		 ;

\draw[] ([c]L10)-- ([c]L11) ;
\draw[] ([c]L11)-- ([c]L12) ;
\draw[] ([c]L12)-- ([c]L10) ;
\draw[] ([c]L20)-- ([c]L21) ;
\draw[] ([c]L21)-- ([c]L22) ;
\draw[] ([c]L22)-- ([c]L20) ;
\draw[] ([c]L30)-- ([c]L31) ;
\draw[] ([c]L31)-- ([c]L32) ;
\draw[] ([c]L32)-- ([c]L30) ;
\draw[] ([c]L10)-- ([c]L20) ;
\draw[] ([c]L11)-- ([c]L21) ;
\draw[] ([c]L12)-- ([c]L22) ;
\draw[] ([c]L20)-- ([c]L30) ;
\draw[] ([c]L21)-- ([c]L31) ;
\draw[] ([c]L22)-- ([c]L32) ;
\draw[] ([c]L10)-- ([c]L21) ;
\draw[] ([c]L11)-- ([c]L22) ;
\draw[] ([c]L12)-- ([c]L20) ;
\draw[] ([c]L20)-- ([c]L31) ;
\draw[] ([c]L21)-- ([c]L32) ;
\draw[] ([c]L22)-- ([c]L30) ;
\draw[] ([c]L40)-- ([c]L41) ;
\draw[] ([c]L41)-- ([c]L42) ;
\draw[] ([c]L42)-- ([c]L40) ;
\draw[] ([c]L30)-- ([c]L40) ;
\draw[] ([c]L31)-- ([c]L41) ;
\draw[] ([c]L32)-- ([c]L42) ;
\draw[] ([c]L30)-- ([c]L41) ;
\draw[] ([c]L31)-- ([c]L42) ;
\draw[] ([c]L32)-- ([c]L40) ;
\end{scope}
\begin{scope}[every coordinate/.style={shift={(\xa+\xb,0)}}]
\node at ([c]lab) {$N$};
\fill[black!100] ([c]L10) circle(0.5ex)
		 ([c]L11) circle(0.5ex)
		 ([c]L12) circle(0.5ex)
		 ([c]L20) circle(0.5ex)
		 ([c]L21) circle(0.5ex)
		 ([c]L22) circle(0.5ex)
		 ([c]L30) circle(0.5ex)
		 ([c]L31) circle(0.5ex)
		 ([c]L32) circle(0.5ex)
		 ;
\draw[->] ([c]Arrow)--+(0.7,1);
\draw[->] ([c]Arrow)--+(0.7,-1);
\draw[] ([c]L10)-- ([c]L11) ;
\draw[] ([c]L11)-- ([c]L12) ;
\draw[] ([c]L12)-- ([c]L10) ;
\draw[] ([c]L20)-- ([c]L21) ;
\draw[] ([c]L21)-- ([c]L22) ;
\draw[] ([c]L22)-- ([c]L20) ;
\draw[] ([c]L30)-- ([c]L31) ;
\draw[] ([c]L31)-- ([c]L32) ;
\draw[] ([c]L32)-- ([c]L30) ;
\draw[] ([c]L10)-- ([c]L20) ;
\draw[] ([c]L11)-- ([c]L21) ;
\draw[] ([c]L12)-- ([c]L22) ;
\draw[] ([c]L20)-- ([c]L30) ;
\draw[] ([c]L21)-- ([c]L31) ;
\draw[] ([c]L22)-- ([c]L32) ;
\draw[] ([c]L10)-- ([c]L21) ;
\draw[] ([c]L11)-- ([c]L22) ;
\draw[] ([c]L10)-- ([c]L22) ;
\draw[] ([c]L20)-- ([c]L31) ;
\draw[] ([c]L21)-- ([c]L32) ;
\draw[] ([c]L20)-- ([c]L32) ;	
\end{scope}

\begin{scope}[every coordinate/.style={shift={(2*\xa+\xb,\y)}}]
\fill[black!100] ([c]L10) circle(0.5ex)
		 ([c]L11) circle(0.5ex)
		 ([c]L12) circle(0.5ex)
		 ([c]L20) circle(0.5ex)
		 ([c]L21) circle(0.5ex)
		 ([c]L22) circle(0.5ex)
		 ([c]L30) circle(0.5ex)
		 ([c]L31) circle(0.5ex)
		 ([c]L32) circle(0.5ex)
		 ([c]L40) circle(0.5ex)
		 ([c]L41) circle(0.5ex)
		 ([c]L42) circle(0.5ex)
		 ;
\draw[] ([c]L10)-- ([c]L11) ;
\draw[] ([c]L11)-- ([c]L12) ;
\draw[] ([c]L12)-- ([c]L10) ;
\draw[] ([c]L20)-- ([c]L21) ;
\draw[] ([c]L21)-- ([c]L22) ;
\draw[] ([c]L22)-- ([c]L20) ;
\draw[] ([c]L30)-- ([c]L31) ;
\draw[] ([c]L31)-- ([c]L32) ;
\draw[] ([c]L32)-- ([c]L30) ;
\draw[] ([c]L10)-- ([c]L20) ;
\draw[] ([c]L11)-- ([c]L21) ;
\draw[] ([c]L12)-- ([c]L22) ;
\draw[] ([c]L20)-- ([c]L30) ;
\draw[] ([c]L21)-- ([c]L31) ;
\draw[] ([c]L22)-- ([c]L32) ;
\draw[] ([c]L10)-- ([c]L21) ;
\draw[] ([c]L11)-- ([c]L22) ;
\draw[] ([c]L10)-- ([c]L22) ;
\draw[] ([c]L20)-- ([c]L31) ;
\draw[] ([c]L21)-- ([c]L32) ;
\draw[] ([c]L20)-- ([c]L32) ;
\draw[] ([c]L40)-- ([c]L41) ;
\draw[] ([c]L41)-- ([c]L42) ;
\draw[] ([c]L42)-- ([c]L40) ;
\draw[] ([c]L30)-- ([c]L40) ;
\draw[] ([c]L31)-- ([c]L41) ;
\draw[] ([c]L32)-- ([c]L42) ;
\draw[] ([c]L30)-- ([c]L41) ;
\draw[] ([c]L31)-- ([c]L42) ;
\draw[] ([c]L30)-- ([c]L42) ;
		
\end{scope}
\begin{scope}[every coordinate/.style={shift={(2*\xa+\xb,-1*\y)}}]
\fill[black!100] ([c]L10) circle(0.5ex)
		 ([c]L11) circle(0.5ex)
		 ([c]L12) circle(0.5ex)
		 ([c]L20) circle(0.5ex)
		 ([c]L21) circle(0.5ex)
		 ([c]L22) circle(0.5ex)
		 ([c]L30) circle(0.5ex)
		 ([c]L31) circle(0.5ex)
		 ([c]L32) circle(0.5ex)
		 ([c]L40) circle(0.5ex)
		 ([c]L41) circle(0.5ex)
		 ([c]L42) circle(0.5ex)
		 ;

\draw[] ([c]L10)-- ([c]L11) ;
\draw[] ([c]L11)-- ([c]L12) ;
\draw[] ([c]L12)-- ([c]L10) ;
\draw[] ([c]L20)-- ([c]L21) ;
\draw[] ([c]L21)-- ([c]L22) ;
\draw[] ([c]L22)-- ([c]L20) ;
\draw[] ([c]L30)-- ([c]L31) ;
\draw[] ([c]L31)-- ([c]L32) ;
\draw[] ([c]L32)-- ([c]L30) ;
\draw[] ([c]L10)-- ([c]L20) ;
\draw[] ([c]L11)-- ([c]L21) ;
\draw[] ([c]L12)-- ([c]L22) ;
\draw[] ([c]L20)-- ([c]L30) ;
\draw[] ([c]L21)-- ([c]L31) ;
\draw[] ([c]L22)-- ([c]L32) ;
\draw[] ([c]L10)-- ([c]L21) ;
\draw[] ([c]L11)-- ([c]L22) ;
\draw[] ([c]L10)-- ([c]L22) ;
\draw[] ([c]L20)-- ([c]L31) ;
\draw[] ([c]L21)-- ([c]L32) ;
\draw[] ([c]L20)-- ([c]L32) ;
\draw[] ([c]L40)-- ([c]L41) ;
\draw[] ([c]L41)-- ([c]L42) ;
\draw[] ([c]L42)-- ([c]L40) ;
\draw[] ([c]L30)-- ([c]L40) ;
\draw[] ([c]L31)-- ([c]L41) ;
\draw[] ([c]L32)-- ([c]L42) ;
\draw[] ([c]L31)-- ([c]L40) ;
\draw[] ([c]L32)-- ([c]L41) ;
\draw[] ([c]L32)-- ([c]L40) ;		
\end{scope}

\begin{scope}[every coordinate/.style={shift={(2*\xa+2*\xb,0)}}]
\node at ([c]lab) {$P$};
\fill[black!100] ([c]L10) circle(0.5ex)
		 ([c]L11) circle(0.5ex)
		 ([c]L12) circle(0.5ex)
		 ([c]L20) circle(0.5ex)
		 ([c]L21) circle(0.5ex)
		 ([c]L22) circle(0.5ex)
		 ([c]L30) circle(0.5ex)
		 ([c]L31) circle(0.5ex)
		 ([c]L32) circle(0.5ex)
		 ;
\draw[->] ([c]Arrow)--+(0.7,1);
\draw[->] ([c]Arrow)--+(0.7,-1);
\draw[] ([c]L10)-- ([c]L11) ;
\draw[] ([c]L11)-- ([c]L12) ;
\draw[] ([c]L12)-- ([c]L10) ;
\draw[] ([c]L20)-- ([c]L21) ;
\draw[] ([c]L21)-- ([c]L22) ;
\draw[] ([c]L22)-- ([c]L20) ;
\draw[] ([c]L30)-- ([c]L31) ;
\draw[] ([c]L31)-- ([c]L32) ;
\draw[] ([c]L32)-- ([c]L30) ;
\draw[] ([c]L10)-- ([c]L20) ;
\draw[] ([c]L11)-- ([c]L21) ;
\draw[] ([c]L12)-- ([c]L22) ;
\draw[] ([c]L20)-- ([c]L30) ;
\draw[] ([c]L21)-- ([c]L31) ;
\draw[] ([c]L22)-- ([c]L32) ;
\draw[] ([c]L10)-- ([c]L21) ;
\draw[] ([c]L11)-- ([c]L22) ;
\draw[] ([c]L10)-- ([c]L22) ;
\draw[] ([c]L21)-- ([c]L30) ;
\draw[] ([c]L22)-- ([c]L31) ;
\draw[] ([c]L22)-- ([c]L30) ;		
\end{scope}
\begin{scope}[every coordinate/.style={shift={(3*\xa+2*\xb,\y)}}]
\fill[black!100] ([c]L10) circle(0.5ex)
		 ([c]L11) circle(0.5ex)
		 ([c]L12) circle(0.5ex)
		 ([c]L20) circle(0.5ex)
		 ([c]L21) circle(0.5ex)
		 ([c]L22) circle(0.5ex)
		 ([c]L30) circle(0.5ex)
		 ([c]L31) circle(0.5ex)
		 ([c]L32) circle(0.5ex)
		 ([c]L40) circle(0.5ex)
		 ([c]L41) circle(0.5ex)
		 ([c]L42) circle(0.5ex)
		 ;
\draw[] ([c]L10)-- ([c]L11) ;
\draw[] ([c]L11)-- ([c]L12) ;
\draw[] ([c]L12)-- ([c]L10) ;
\draw[] ([c]L20)-- ([c]L21) ;
\draw[] ([c]L21)-- ([c]L22) ;
\draw[] ([c]L22)-- ([c]L20) ;
\draw[] ([c]L30)-- ([c]L31) ;
\draw[] ([c]L31)-- ([c]L32) ;
\draw[] ([c]L32)-- ([c]L30) ;
\draw[] ([c]L10)-- ([c]L20) ;
\draw[] ([c]L11)-- ([c]L21) ;
\draw[] ([c]L12)-- ([c]L22) ;
\draw[] ([c]L20)-- ([c]L30) ;
\draw[] ([c]L21)-- ([c]L31) ;
\draw[] ([c]L22)-- ([c]L32) ;
\draw[] ([c]L10)-- ([c]L21) ;
\draw[] ([c]L11)-- ([c]L22) ;
\draw[] ([c]L10)-- ([c]L22) ;
\draw[] ([c]L21)-- ([c]L30) ;
\draw[] ([c]L22)-- ([c]L31) ;
\draw[] ([c]L22)-- ([c]L30) ;
\draw[] ([c]L40)-- ([c]L41) ;
\draw[] ([c]L41)-- ([c]L42) ;
\draw[] ([c]L42)-- ([c]L40) ;
\draw[] ([c]L30)-- ([c]L40) ;
\draw[] ([c]L31)-- ([c]L41) ;
\draw[] ([c]L32)-- ([c]L42) ;
\draw[] ([c]L30)-- ([c]L41) ;
\draw[] ([c]L31)-- ([c]L42) ;
\draw[] ([c]L30)-- ([c]L42) ;		
\end{scope}
\begin{scope}[every coordinate/.style={shift={(3*\xa+2*\xb,-1*\y)}}]
\fill[black!100] ([c]L10) circle(0.5ex)
		 ([c]L11) circle(0.5ex)
		 ([c]L12) circle(0.5ex)
		 ([c]L20) circle(0.5ex)
		 ([c]L21) circle(0.5ex)
		 ([c]L22) circle(0.5ex)
		 ([c]L30) circle(0.5ex)
		 ([c]L31) circle(0.5ex)
		 ([c]L32) circle(0.5ex)
		 ([c]L40) circle(0.5ex)
		 ([c]L41) circle(0.5ex)
		 ([c]L42) circle(0.5ex)
		 ;
\draw[] ([c]L10)-- ([c]L11) ;
\draw[] ([c]L11)-- ([c]L12) ;
\draw[] ([c]L12)-- ([c]L10) ;
\draw[] ([c]L20)-- ([c]L21) ;
\draw[] ([c]L21)-- ([c]L22) ;
\draw[] ([c]L22)-- ([c]L20) ;
\draw[] ([c]L30)-- ([c]L31) ;
\draw[] ([c]L31)-- ([c]L32) ;
\draw[] ([c]L32)-- ([c]L30) ;
\draw[] ([c]L10)-- ([c]L20) ;
\draw[] ([c]L11)-- ([c]L21) ;
\draw[] ([c]L12)-- ([c]L22) ;
\draw[] ([c]L20)-- ([c]L30) ;
\draw[] ([c]L21)-- ([c]L31) ;
\draw[] ([c]L22)-- ([c]L32) ;
\draw[] ([c]L10)-- ([c]L21) ;
\draw[] ([c]L11)-- ([c]L22) ;
\draw[] ([c]L10)-- ([c]L22) ;
\draw[] ([c]L21)-- ([c]L30) ;
\draw[] ([c]L22)-- ([c]L31) ;
\draw[] ([c]L22)-- ([c]L30) ;
\draw[] ([c]L40)-- ([c]L41) ;
\draw[] ([c]L41)-- ([c]L42) ;
\draw[] ([c]L42)-- ([c]L40) ;
\draw[] ([c]L30)-- ([c]L40) ;
\draw[] ([c]L31)-- ([c]L41) ;
\draw[] ([c]L32)-- ([c]L42) ;
\draw[] ([c]L31)-- ([c]L40) ;
\draw[] ([c]L32)-- ([c]L41) ;
\draw[] ([c]L32)-- ([c]L40) ;		
\end{scope}
\end{tikzpicture}
\caption{The extensions of the graphs $M$, $N$ and $P$}\label{3-possible-cases-andtheir-5-exten}
\end{center}
\end{figure}

The base for our inductive proof is the three graphs, $M$, $N$ and $P$ in Figure~\ref{3-possible-cases-andtheir-5-exten}, which are the only non-isomorphic  triangulations of the GCCG on nine vertices containing no graph in Figure~\ref{non-repr-induced-subgraphs} as an induced subgraph. Each of these graphs can be semi-transitively oriented as shown in Figure~\ref{induction-base-case}. Note that we provide two semi-transitive orientations for the graphs $N$ and $P$, which is essentially in our inductive argument. Also, note that each triangulation of a GCCG with three sectors on less than nine vertices can be oriented semi-transitively (just remove the external layer in the graphs in Figure~\ref{induction-base-case}).

Further, we note that there are only five ways in total in which the graphs $M$, $N$ and $P$ can be extended by one more (external) layer if the graphs in Figure~\ref{non-repr-induced-subgraphs} are to be avoided as an induced subgraph. The extensions are shown in~\ref{3-possible-cases-andtheir-5-exten}.

\begin{figure}[!htbp]
\begin{center}
\begin{tikzpicture}[scale=0.8,dot/.style={circle,inner sep=1.5pt,fill,name=#1}]
\def\dist{3.5}
\coordinate (lab) at (0,-1.3) ;
\coordinate (L1) at (-0.5, -0.289) ;
\coordinate (L2) at (0.5, -0.289) ;
\coordinate (L3) at (0, 0.577) ;
\coordinate (L4) at (-1, -0.577) ;
\coordinate (L5) at (1, -0.577) ;
\coordinate (L6) at (0, 1.154) ;
\coordinate (L7) at (-1.5, -0.867) ;
\coordinate (L8) at (1.5, -0.867) ;
\coordinate (L9) at (0, 1.731) ;


\begin{scope}[every coordinate/.style={shift={(0,0)}}]
\node at ([c]lab) {$M_1$};
\fill[black!100] ([c]L1) circle(0.5ex)
		 ([c]L2) circle(0.5ex)
		 ([c]L3) circle(0.5ex)
		 ([c]L4) circle(0.5ex)
		 ([c]L5) circle(0.5ex)
		 ([c]L6) circle(0.5ex)
		 ([c]L7) circle(0.5ex)
		 ([c]L8) circle(0.5ex)
		 ([c]L9) circle(0.5ex)
		 ;
\draw[,directed] ([c]L1) -- ([c]L2) ;
\draw[,directed] ([c]L1) -- ([c]L3) ;
\draw[,directed] ([c]L1) -- ([c]L4) ;
\draw[,directed] ([c]L1) -- ([c]L5) ;
\draw[,directed] ([c]L2) -- ([c]L3) ;
\draw[,directed] ([c]L2) -- ([c]L5) ;
\draw[,directed] ([c]L2) -- ([c]L6) ;
\draw[,directed] ([c]L3) -- ([c]L6) ;
\draw[,directed] ([c]L4) -- ([c]L3) ;
\draw[,directed] ([c]L4) -- ([c]L5) ;
\draw[,directed] ([c]L4) -- ([c]L6) ;
\draw[,directed] ([c]L4) -- ([c]L7) ;
\draw[,directed] ([c]L4) -- ([c]L8) ;
\draw[,directed] ([c]L5) -- ([c]L6) ;
\draw[,directed] ([c]L5) -- ([c]L8) ;
\draw[,directed] ([c]L5) -- ([c]L9) ;
\draw[,directed] ([c]L6) -- ([c]L9) ;
\draw[,directed] ([c]L7) -- ([c]L6) ;
\draw[,directed] ([c]L7) -- ([c]L8) ;
\draw[,directed] ([c]L7) -- ([c]L9) ;
\draw[,directed] ([c]L8) -- ([c]L9) ;
\end{scope}

\begin{scope}[every coordinate/.style={shift={(2*\dist,0)}}]
\node at ([c]lab) {$N_2$};
\fill[black!100] ([c]L1) circle(0.5ex)
		 ([c]L2) circle(0.5ex)
		 ([c]L3) circle(0.5ex)
		 ([c]L4) circle(0.5ex)
		 ([c]L5) circle(0.5ex)
		 ([c]L6) circle(0.5ex)
		 ([c]L7) circle(0.5ex)
		 ([c]L8) circle(0.5ex)
		 ([c]L9) circle(0.5ex)
		 ;
\draw[,directed] ([c]L1) -- ([c]L4) ;
\draw[,directed] ([c]L1) -- ([c]L5) ;
\draw[,directed] ([c]L1) -- ([c]L6) ;
\draw[,directed] ([c]L2) -- ([c]L1) ;
\draw[,directed] ([c]L2) -- ([c]L5) ;
\draw[,directed] ([c]L2) -- ([c]L6) ;
\draw[,directed] ([c]L3) -- ([c]L1) ;
\draw[,directed] ([c]L3) -- ([c]L2) ;
\draw[,directed] ([c]L3) -- ([c]L6) ;
\draw[,directed] ([c]L4) -- ([c]L7) ;
\draw[,directed] ([c]L4) -- ([c]L8) ;
\draw[,directed] ([c]L4) -- ([c]L9) ;
\draw[,directed] ([c]L5) -- ([c]L4) ;
\draw[,directed] ([c]L5) -- ([c]L8) ;
\draw[,directed] ([c]L5) -- ([c]L9) ;
\draw[,directed] ([c]L6) -- ([c]L4) ;
\draw[,directed] ([c]L6) -- ([c]L5) ;
\draw[,directed] ([c]L6) -- ([c]L9) ;
\draw[,directed] ([c]L8) -- ([c]L7) ;
\draw[,directed] ([c]L9) -- ([c]L7) ;
\draw[,directed] ([c]L9) -- ([c]L8) ;
\end{scope}

\begin{scope}[every coordinate/.style={shift={(\dist,0)}}]
\node at ([c]lab) {$N_1$};
\fill[black!100] ([c]L1) circle(0.5ex)
		 ([c]L2) circle(0.5ex)
		 ([c]L3) circle(0.5ex)
		 ([c]L4) circle(0.5ex)
		 ([c]L5) circle(0.5ex)
		 ([c]L6) circle(0.5ex)
		 ([c]L7) circle(0.5ex)
		 ([c]L8) circle(0.5ex)
		 ([c]L9) circle(0.5ex)
		 ;
\draw[,directed] ([c]L1) -- ([c]L2) ;
\draw[,directed] ([c]L1) -- ([c]L3) ;
\draw[,directed] ([c]L1) -- ([c]L4) ;
\draw[,directed] ([c]L1) -- ([c]L5) ;
\draw[,directed] ([c]L1) -- ([c]L6) ;
\draw[,directed] ([c]L2) -- ([c]L3) ;
\draw[,directed] ([c]L2) -- ([c]L5) ;
\draw[,directed] ([c]L2) -- ([c]L6) ;
\draw[,directed] ([c]L3) -- ([c]L6) ;
\draw[,directed] ([c]L4) -- ([c]L5) ;
\draw[,directed] ([c]L4) -- ([c]L6) ;
\draw[,directed] ([c]L4) -- ([c]L7) ;
\draw[,directed] ([c]L4) -- ([c]L8) ;
\draw[,directed] ([c]L4) -- ([c]L9) ;
\draw[,directed] ([c]L5) -- ([c]L6) ;
\draw[,directed] ([c]L5) -- ([c]L8) ;
\draw[,directed] ([c]L5) -- ([c]L9) ;
\draw[,directed] ([c]L6) -- ([c]L9) ;
\draw[,directed] ([c]L7) -- ([c]L8) ;
\draw[,directed] ([c]L7) -- ([c]L9) ;
\draw[,directed] ([c]L8) -- ([c]L9) ;
\end{scope}

\begin{scope}[every coordinate/.style={shift={(3*\dist,0)}}]
\node at ([c]lab) {$P_1$};
\fill[black!100] ([c]L1) circle(0.5ex)
		 ([c]L2) circle(0.5ex)
		 ([c]L3) circle(0.5ex)
		 ([c]L4) circle(0.5ex)
		 ([c]L5) circle(0.5ex)
		 ([c]L6) circle(0.5ex)
		 ([c]L7) circle(0.5ex)
		 ([c]L8) circle(0.5ex)
		 ([c]L9) circle(0.5ex)
		 ;
\draw[,directed] ([c]L1) -- ([c]L2) ;
\draw[,directed] ([c]L1) -- ([c]L3) ;
\draw[,directed] ([c]L1) -- ([c]L4) ;
\draw[,directed] ([c]L1) -- ([c]L5) ;
\draw[,directed] ([c]L1) -- ([c]L6) ;
\draw[,directed] ([c]L2) -- ([c]L3) ;
\draw[,directed] ([c]L2) -- ([c]L5) ;
\draw[,directed] ([c]L2) -- ([c]L6) ;
\draw[,directed] ([c]L3) -- ([c]L6) ;
\draw[,directed] ([c]L4) -- ([c]L5) ;
\draw[,directed] ([c]L4) -- ([c]L6) ;
\draw[,directed] ([c]L4) -- ([c]L7) ;
\draw[,directed] ([c]L5) -- ([c]L6) ;
\draw[,directed] ([c]L5) -- ([c]L7) ;
\draw[,directed] ([c]L5) -- ([c]L8) ;
\draw[,directed] ([c]L6) -- ([c]L7) ;
\draw[,directed] ([c]L6) -- ([c]L8) ;
\draw[,directed] ([c]L6) -- ([c]L9) ;
\draw[,directed] ([c]L7) -- ([c]L8) ;
\draw[,directed] ([c]L7) -- ([c]L9) ;
\draw[,directed] ([c]L8) -- ([c]L9) ;
\end{scope}

\begin{scope}[every coordinate/.style={shift={(4*\dist,0)}}]
\node at ([c]lab) {$P_2$};
\fill[black!100] ([c]L1) circle(0.5ex)
		 ([c]L2) circle(0.5ex)
		 ([c]L3) circle(0.5ex)
		 ([c]L4) circle(0.5ex)
		 ([c]L5) circle(0.5ex)
		 ([c]L6) circle(0.5ex)
		 ([c]L7) circle(0.5ex)
		 ([c]L8) circle(0.5ex)
		 ([c]L9) circle(0.5ex)
		 ;
\draw[,directed] ([c]L1) -- ([c]L4) ;
\draw[,directed] ([c]L1) -- ([c]L5) ;
\draw[,directed] ([c]L1) -- ([c]L6) ;
\draw[,directed] ([c]L2) -- ([c]L1) ;
\draw[,directed] ([c]L2) -- ([c]L5) ;
\draw[,directed] ([c]L2) -- ([c]L6) ;
\draw[,directed] ([c]L3) -- ([c]L1) ;
\draw[,directed] ([c]L3) -- ([c]L2) ;
\draw[,directed] ([c]L3) -- ([c]L6) ;
\draw[,directed] ([c]L4) -- ([c]L7) ;
\draw[,directed] ([c]L5) -- ([c]L4) ;
\draw[,directed] ([c]L5) -- ([c]L7) ;
\draw[,directed] ([c]L5) -- ([c]L8) ;
\draw[,directed] ([c]L6) -- ([c]L4) ;
\draw[,directed] ([c]L6) -- ([c]L5) ;
\draw[,directed] ([c]L6) -- ([c]L7) ;
\draw[,directed] ([c]L6) -- ([c]L8) ;
\draw[,directed] ([c]L6) -- ([c]L9) ;
\draw[,directed] ([c]L8) -- ([c]L7) ;
\draw[,directed] ([c]L9) -- ([c]L7) ;
\draw[,directed] ([c]L9) -- ([c]L8) ;
\end{scope}
\end{tikzpicture}
\caption{Semi-transitive orientations of the graphs $M$, $N$ and $P$}\label{induction-base-case}
\end{center}
\end{figure}

We now make an inductive hypothesis that each triangulation of a GCCG with three sectors and $n$ layers having no graph in Figure~\ref{non-repr-induced-subgraphs} as an induced subgraph can be oriented semi-transitive so that two external layers form the same directed graph as one of the directed graphs in Figure~\ref{induction-base-case}.

We will next prove the statement for $n+1$ layers by adding to all of the graphs in Figure~\ref{induction-base-case}  one more layer, in all possible ways, and extending the orientation of the resulting partially oriented graph to a semi-transitive orientation.  It is important to note that in each case below, it will follow from our way to extend orientations that no cycle or shortcut will be possible involving vertices on newly added level $C_{n+1}$ {\em and} vertices on levels $C_{n-3}$, $C_{n-4},\ldots, C_0$.

$M$ has a unique extension by an extra layer, and the orientation of $M_1$ can be extended to that shown in Figure~\ref{M1-extension}. Note that the two external layers of the graph in Figure~\ref{M1-extension} form $M_1$, as desired.

\begin{figure}[!htbp]
\begin{center}
\begin{tikzpicture}[scale=0.8,dot/.style={circle,inner sep=1.5pt,fill,name=#1}]
\node[dot=L1] at (-0.5, -0.289) {} ;
\node[dot=L2] at (0.5, -0.289) {} ;
\node[dot=L3] at (0, 0.577) {} ;
\node[dot=L4] at (-1, -0.577) {} ;
\node[dot=L5] at (1, -0.577) {} ;
\node[dot=L6] at (0, 1.154) {} ;
\node[dot=L7] at (-1.5, -0.867) {} ;
\node[dot=L8] at (1.5, -0.867) {} ;
\node[dot=L9] at (0, 1.731) {} ;
\node[dot=L10] at (-2., -1.156) {} ;
\node[dot=L11] at (2., -1.156) {} ;
\node[dot=L12] at (0, 2.308) {} ;
\draw[,directed] (L1) -- (L2) ;
\draw[,directed] (L1) -- (L3) ;
\draw[,directed] (L1) -- (L4) ;
\draw[,directed] (L1) -- (L5) ;
\draw[,directed] (L2) -- (L3) ;
\draw[,directed] (L2) -- (L5) ;
\draw[,directed] (L2) -- (L6) ;
\draw[,directed] (L3) -- (L6) ;
\draw[,directed] (L4) -- (L3) ;
\draw[,directed] (L4) -- (L5) ;
\draw[,directed] (L4) -- (L6) ;
\draw[,directed] (L4) -- (L7) ;
\draw[,directed] (L4) -- (L8) ;
\draw[,directed] (L5) -- (L6) ;
\draw[,directed] (L5) -- (L8) ;
\draw[,directed] (L5) -- (L9) ;
\draw[,directed] (L6) -- (L9) ;
\draw[,directed] (L7) -- (L6) ;
\draw[,directed] (L7) -- (L8) ;
\draw[,directed] (L7) -- (L9) ;
\draw[,directed] (L7) -- (L10) ;
\draw[,directed] (L7) -- (L11) ;
\draw[,directed] (L8) -- (L9) ;
\draw[,directed] (L8) -- (L11) ;
\draw[,directed] (L8) -- (L12) ;
\draw[,directed] (L9) -- (L12) ;
\draw[,directed] (L10) -- (L9) ;
\draw[,directed] (L10) -- (L11) ;
\draw[,directed] (L10) -- (L12) ;
\draw[,directed] (L11) -- (L12) ;
\end{tikzpicture}

\caption{The extension of $M_1$}\label{M1-extension}
\end{center}
\end{figure}

$N_1$ has two possible extensions by an extra layer, but both of them can be extended to semi-transitive orientation: see $N_{11}$ and $N_{12}$ in Figure~\ref{N1-N2-extensions}. Two external layers of $N_{11}$ and $N_{12}$ form $N_1$ and $P_1$, respectively, as desired.

\begin{figure}[!htbp]
\begin{center}
\begin{tikzpicture}[scale=0.8,dot/.style={circle,inner sep=1pt,fill,name=#1}]
\def\dist{3.5}
\def\x{5}
\def\y{4.5}
\coordinate (lab) at (0,-1.5) ; 
\coordinate (L1) at (-0.5, -0.289);
\coordinate (L2) at (0.5, -0.289);
\coordinate (L3) at (0, 0.577);
\coordinate (L4) at (-1, -0.577);
\coordinate (L5) at (1, -0.577);
\coordinate (L6) at (0, 1.154);
\coordinate (L7) at (-1.5, -0.867);
\coordinate (L8) at (1.5, -0.867);
\coordinate (L9) at (0, 1.731);
\coordinate (L10) at (-2., -1.156);
\coordinate (L11) at (2., -1.156);
\coordinate (L12) at (0, 2.308);

\begin{scope}[every coordinate/.style={shift={(0, 0)}}]
\node at ([c]lab) {$N_{11}$};
\fill[black!100] ([c]L1) circle(0.5ex)
		 ([c]L2) circle(0.5ex)
		 ([c]L3) circle(0.5ex)
		 ([c]L4) circle(0.5ex)
		 ([c]L5) circle(0.5ex)
		 ([c]L6) circle(0.5ex)
		 ([c]L7) circle(0.5ex)
		 ([c]L8) circle(0.5ex)
		 ([c]L9) circle(0.5ex)
		 ([c]L10) circle(0.5ex)
		 ([c]L11) circle(0.5ex)
		 ([c]L12) circle(0.5ex)
		 ;
\draw[,directed] ([c]L1) -- ([c]L2) ;
\draw[,directed] ([c]L1) -- ([c]L3) ;
\draw[,directed] ([c]L1) -- ([c]L4) ;
\draw[,directed] ([c]L1) -- ([c]L5) ;
\draw[,directed] ([c]L1) -- ([c]L6) ;
\draw[,directed] ([c]L2) -- ([c]L3) ;
\draw[,directed] ([c]L2) -- ([c]L5) ;
\draw[,directed] ([c]L2) -- ([c]L6) ;
\draw[,directed] ([c]L3) -- ([c]L6) ;
\draw[,directed] ([c]L4) -- ([c]L5) ;
\draw[,directed] ([c]L4) -- ([c]L6) ;
\draw[,directed] ([c]L4) -- ([c]L7) ;
\draw[,directed] ([c]L4) -- ([c]L8) ;
\draw[,directed] ([c]L4) -- ([c]L9) ;
\draw[,directed] ([c]L5) -- ([c]L6) ;
\draw[,directed] ([c]L5) -- ([c]L8) ;
\draw[,directed] ([c]L5) -- ([c]L9) ;
\draw[,directed] ([c]L6) -- ([c]L9) ;
\draw[,directed] ([c]L7) -- ([c]L8) ;
\draw[,directed] ([c]L7) -- ([c]L9) ;
\draw[,directed] ([c]L7) -- ([c]L10) ;
\draw[,directed] ([c]L7) -- ([c]L11) ;
\draw[,directed] ([c]L7) -- ([c]L12) ;
\draw[,directed] ([c]L8) -- ([c]L9) ;
\draw[,directed] ([c]L8) -- ([c]L11) ;
\draw[,directed] ([c]L8) -- ([c]L12) ;
\draw[,directed] ([c]L9) -- ([c]L12) ;
\draw[,directed] ([c]L10) -- ([c]L11) ;
\draw[,directed] ([c]L10) -- ([c]L12) ;
\draw[,directed] ([c]L11) -- ([c]L12) ;
\end{scope}

\begin{scope}[every coordinate/.style={shift={(\x, 0*\y)}}]
\node at ([c]lab) {$N_{12}$};
\fill[black!100] ([c]L1) circle(0.5ex)
		 ([c]L2) circle(0.5ex)
		 ([c]L3) circle(0.5ex)
		 ([c]L4) circle(0.5ex)
		 ([c]L5) circle(0.5ex)
		 ([c]L6) circle(0.5ex)
		 ([c]L7) circle(0.5ex)
		 ([c]L8) circle(0.5ex)
		 ([c]L9) circle(0.5ex)
		 ([c]L10) circle(0.5ex)
		 ([c]L11) circle(0.5ex)
		 ([c]L12) circle(0.5ex)
		 ;
\draw[,directed] ([c]L1) -- ([c]L2) ;
\draw[,directed] ([c]L1) -- ([c]L3) ;
\draw[,directed] ([c]L1) -- ([c]L4) ;
\draw[,directed] ([c]L1) -- ([c]L5) ;
\draw[,directed] ([c]L1) -- ([c]L6) ;
\draw[,directed] ([c]L2) -- ([c]L3) ;
\draw[,directed] ([c]L2) -- ([c]L5) ;
\draw[,directed] ([c]L2) -- ([c]L6) ;
\draw[,directed] ([c]L3) -- ([c]L6) ;
\draw[,directed] ([c]L4) -- ([c]L5) ;
\draw[,directed] ([c]L4) -- ([c]L6) ;
\draw[,directed] ([c]L4) -- ([c]L7) ;
\draw[,directed] ([c]L4) -- ([c]L8) ;
\draw[,directed] ([c]L4) -- ([c]L9) ;
\draw[,directed] ([c]L5) -- ([c]L6) ;
\draw[,directed] ([c]L5) -- ([c]L8) ;
\draw[,directed] ([c]L5) -- ([c]L9) ;
\draw[,directed] ([c]L6) -- ([c]L9) ;
\draw[,directed] ([c]L7) -- ([c]L8) ;
\draw[,directed] ([c]L7) -- ([c]L9) ;
\draw[,directed] ([c]L7) -- ([c]L10) ;
\draw[,directed] ([c]L8) -- ([c]L9) ;
\draw[,directed] ([c]L8) -- ([c]L10) ;
\draw[,directed] ([c]L8) -- ([c]L11) ;
\draw[,directed] ([c]L9) -- ([c]L10) ;
\draw[,directed] ([c]L9) -- ([c]L11) ;
\draw[,directed] ([c]L9) -- ([c]L12) ;
\draw[,directed] ([c]L10) -- ([c]L11) ;
\draw[,directed] ([c]L10) -- ([c]L12) ;
\draw[,directed] ([c]L11) -- ([c]L12) ;
\end{scope}

\begin{scope}[every coordinate/.style={shift={(0*\x, -1*\y)}}]
\node at ([c]lab) {$N_{21}$};
\fill[black!100] ([c]L1) circle(0.5ex)
		 ([c]L2) circle(0.5ex)
		 ([c]L3) circle(0.5ex)
		 ([c]L4) circle(0.5ex)
		 ([c]L5) circle(0.5ex)
		 ([c]L6) circle(0.5ex)
		 ([c]L7) circle(0.5ex)
		 ([c]L8) circle(0.5ex)
		 ([c]L9) circle(0.5ex)
		 ([c]L10) circle(0.5ex)
		 ([c]L11) circle(0.5ex)
		 ([c]L12) circle(0.5ex)
		 ;
\draw[,directed] ([c]L1) -- ([c]L4) ;
\draw[,directed] ([c]L1) -- ([c]L5) ;
\draw[,directed] ([c]L1) -- ([c]L6) ;
\draw[,directed] ([c]L2) -- ([c]L1) ;
\draw[,directed] ([c]L2) -- ([c]L5) ;
\draw[,directed] ([c]L2) -- ([c]L6) ;
\draw[,directed] ([c]L3) -- ([c]L1) ;
\draw[,directed] ([c]L3) -- ([c]L2) ;
\draw[,directed] ([c]L3) -- ([c]L6) ;
\draw[,directed] ([c]L4) -- ([c]L7) ;
\draw[,directed] ([c]L4) -- ([c]L8) ;
\draw[,directed] ([c]L4) -- ([c]L9) ;
\draw[,directed] ([c]L5) -- ([c]L4) ;
\draw[,directed] ([c]L5) -- ([c]L8) ;
\draw[,directed] ([c]L5) -- ([c]L9) ;
\draw[,directed] ([c]L6) -- ([c]L4) ;
\draw[,directed] ([c]L6) -- ([c]L5) ;
\draw[,directed] ([c]L6) -- ([c]L9) ;
\draw[,directed] ([c]L7) -- ([c]L10) ;
\draw[,directed] ([c]L7) -- ([c]L11) ;
\draw[,directed] ([c]L7) -- ([c]L12) ;
\draw[,directed] ([c]L8) -- ([c]L7) ;
\draw[,directed] ([c]L8) -- ([c]L11) ;
\draw[,directed] ([c]L8) -- ([c]L12) ;
\draw[,directed] ([c]L9) -- ([c]L7) ;
\draw[,directed] ([c]L9) -- ([c]L8) ;
\draw[,directed] ([c]L9) -- ([c]L12) ;
\draw[,directed] ([c]L11) -- ([c]L10) ;
\draw[,directed] ([c]L12) -- ([c]L10) ;
\draw[,directed] ([c]L12) -- ([c]L11) ;
\end{scope}

\begin{scope}[every coordinate/.style={shift={(1*\x, -1*\y)}}]
\node at ([c]lab) {$N_{22}$};
\fill[black!100] ([c]L1) circle(0.5ex)
		 ([c]L2) circle(0.5ex)
		 ([c]L3) circle(0.5ex)
		 ([c]L4) circle(0.5ex)
		 ([c]L5) circle(0.5ex)
		 ([c]L6) circle(0.5ex)
		 ([c]L7) circle(0.5ex)
		 ([c]L8) circle(0.5ex)
		 ([c]L9) circle(0.5ex)
		 ([c]L10) circle(0.5ex)
		 ([c]L11) circle(0.5ex)
		 ([c]L12) circle(0.5ex)
		 ;
\draw[,directed] ([c]L1) -- ([c]L4) ;
\draw[,directed] ([c]L1) -- ([c]L5) ;
\draw[,directed] ([c]L1) -- ([c]L6) ;
\draw[,directed] ([c]L2) -- ([c]L1) ;
\draw[,directed] ([c]L2) -- ([c]L5) ;
\draw[,directed] ([c]L2) -- ([c]L6) ;
\draw[,directed] ([c]L3) -- ([c]L1) ;
\draw[,directed] ([c]L3) -- ([c]L2) ;
\draw[,directed] ([c]L3) -- ([c]L6) ;
\draw[,directed] ([c]L4) -- ([c]L7) ;
\draw[,directed] ([c]L4) -- ([c]L8) ;
\draw[,directed] ([c]L4) -- ([c]L9) ;
\draw[,directed] ([c]L5) -- ([c]L4) ;
\draw[,directed] ([c]L5) -- ([c]L8) ;
\draw[,directed] ([c]L5) -- ([c]L9) ;
\draw[,directed] ([c]L6) -- ([c]L4) ;
\draw[,directed] ([c]L6) -- ([c]L5) ;
\draw[,directed] ([c]L6) -- ([c]L9) ;
\draw[,directed] ([c]L7) -- ([c]L10) ;
\draw[,directed] ([c]L8) -- ([c]L7) ;
\draw[,directed] ([c]L8) -- ([c]L10) ;
\draw[,directed] ([c]L8) -- ([c]L11) ;
\draw[,directed] ([c]L9) -- ([c]L7) ;
\draw[,directed] ([c]L9) -- ([c]L8) ;
\draw[,directed] ([c]L9) -- ([c]L10) ;
\draw[,directed] ([c]L9) -- ([c]L11) ;
\draw[,directed] ([c]L9) -- ([c]L12) ;
\draw[,directed] ([c]L11) -- ([c]L10) ;
\draw[,directed] ([c]L12) -- ([c]L10) ;
\draw[,directed] ([c]L12) -- ([c]L11) ;
\end{scope}
\end{tikzpicture}
\caption{The extensions of $N_1$ (top row) and of $N_2$ (bottom row)}\label{N1-N2-extensions}
\end{center}
\end{figure}

$N_2$ has two possible extensions by an extra layer, but both of them can be extended to semi-transitive orientation: see $N_{21}$ and $N_{22}$ in Figure~\ref{N1-N2-extensions}. Two external layers of $N_{21}$ and $N_{22}$ form $N_2$ and $P_2$, respectively, as desired.

\begin{figure}[!htbp]
\begin{center}
\begin{tikzpicture}[scale=0.8,dot/.style={circle,inner sep=1pt,fill,name=#1}]
\def\dist{3.5}
\def\x{5}
\def\y{4.5}
\coordinate (lab) at (0,-1.5) ; 
\coordinate (L1) at (-0.5, -0.289);
\coordinate (L2) at (0.5, -0.289);
\coordinate (L3) at (0, 0.577);
\coordinate (L4) at (-1, -0.577);
\coordinate (L5) at (1, -0.577);
\coordinate (L6) at (0, 1.154);
\coordinate (L7) at (-1.5, -0.867);
\coordinate (L8) at (1.5, -0.867);
\coordinate (L9) at (0, 1.731);
\coordinate (L10) at (-2., -1.156);
\coordinate (L11) at (2., -1.156);
\coordinate (L12) at (0, 2.308);

\begin{scope}[every coordinate/.style={shift={(0*\x, 0*\y)}}]
\node at ([c]lab) {$P_{11}$};
\fill[black!100] ([c]L1) circle(0.5ex)
		 ([c]L2) circle(0.5ex)
		 ([c]L3) circle(0.5ex)
		 ([c]L4) circle(0.5ex)
		 ([c]L5) circle(0.5ex)
		 ([c]L6) circle(0.5ex)
		 ([c]L7) circle(0.5ex)
		 ([c]L8) circle(0.5ex)
		 ([c]L9) circle(0.5ex)
		 ([c]L10) circle(0.5ex)
		 ([c]L11) circle(0.5ex)
		 ([c]L12) circle(0.5ex)
		 ;
\draw[,directed] ([c]L1) -- ([c]L2) ;
\draw[,directed] ([c]L1) -- ([c]L3) ;
\draw[,directed] ([c]L1) -- ([c]L4) ;
\draw[,directed] ([c]L1) -- ([c]L5) ;
\draw[,directed] ([c]L1) -- ([c]L6) ;
\draw[,directed] ([c]L2) -- ([c]L3) ;
\draw[,directed] ([c]L2) -- ([c]L5) ;
\draw[,directed] ([c]L2) -- ([c]L6) ;
\draw[,directed] ([c]L3) -- ([c]L6) ;
\draw[,directed] ([c]L4) -- ([c]L5) ;
\draw[,directed] ([c]L4) -- ([c]L6) ;
\draw[,directed] ([c]L4) -- ([c]L7) ;
\draw[,directed] ([c]L5) -- ([c]L6) ;
\draw[,directed] ([c]L5) -- ([c]L7) ;
\draw[,directed] ([c]L5) -- ([c]L8) ;
\draw[,directed] ([c]L6) -- ([c]L7) ;
\draw[,directed] ([c]L6) -- ([c]L8) ;
\draw[,directed] ([c]L6) -- ([c]L9) ;
\draw[,directed] ([c]L7) -- ([c]L8) ;
\draw[,directed] ([c]L7) -- ([c]L9) ;
\draw[,directed] ([c]L7) -- ([c]L10) ;
\draw[,directed] ([c]L7) -- ([c]L11) ;
\draw[,directed] ([c]L7) -- ([c]L12) ;
\draw[,directed] ([c]L8) -- ([c]L9) ;
\draw[,directed] ([c]L8) -- ([c]L11) ;
\draw[,directed] ([c]L8) -- ([c]L12) ;
\draw[,directed] ([c]L9) -- ([c]L12) ;
\draw[,directed] ([c]L10) -- ([c]L11) ;
\draw[,directed] ([c]L10) -- ([c]L12) ;
\draw[,directed] ([c]L11) -- ([c]L12) ;
\end{scope}

\begin{scope}[every coordinate/.style={shift={(\x, 0*\y)}}]
\node at ([c]lab) {$P_{12}$};
\fill[black!100] ([c]L1) circle(0.5ex)
		 ([c]L2) circle(0.5ex)
		 ([c]L3) circle(0.5ex)
		 ([c]L4) circle(0.5ex)
		 ([c]L5) circle(0.5ex)
		 ([c]L6) circle(0.5ex)
		 ([c]L7) circle(0.5ex)
		 ([c]L8) circle(0.5ex)
		 ([c]L9) circle(0.5ex)
		 ([c]L10) circle(0.5ex)
		 ([c]L11) circle(0.5ex)
		 ([c]L12) circle(0.5ex)
		 ;
\draw[,directed] ([c]L1) -- ([c]L2) ;
\draw[,directed] ([c]L1) -- ([c]L3) ;
\draw[,directed] ([c]L1) -- ([c]L4) ;
\draw[,directed] ([c]L1) -- ([c]L5) ;
\draw[,directed] ([c]L1) -- ([c]L6) ;
\draw[,directed] ([c]L2) -- ([c]L3) ;
\draw[,directed] ([c]L2) -- ([c]L5) ;
\draw[,directed] ([c]L2) -- ([c]L6) ;
\draw[,directed] ([c]L3) -- ([c]L6) ;
\draw[,directed] ([c]L4) -- ([c]L5) ;
\draw[,directed] ([c]L4) -- ([c]L6) ;
\draw[,directed] ([c]L4) -- ([c]L7) ;
\draw[,directed] ([c]L5) -- ([c]L6) ;
\draw[,directed] ([c]L5) -- ([c]L7) ;
\draw[,directed] ([c]L5) -- ([c]L8) ;
\draw[,directed] ([c]L6) -- ([c]L7) ;
\draw[,directed] ([c]L6) -- ([c]L8) ;
\draw[,directed] ([c]L6) -- ([c]L9) ;
\draw[,directed] ([c]L7) -- ([c]L8) ;
\draw[,directed] ([c]L7) -- ([c]L9) ;
\draw[,directed] ([c]L7) -- ([c]L10) ;
\draw[,directed] ([c]L8) -- ([c]L9) ;
\draw[,directed] ([c]L8) -- ([c]L10) ;
\draw[,directed] ([c]L8) -- ([c]L11) ;
\draw[,directed] ([c]L9) -- ([c]L10) ;
\draw[,directed] ([c]L9) -- ([c]L11) ;
\draw[,directed] ([c]L9) -- ([c]L12) ;
\draw[,directed] ([c]L10) -- ([c]L11) ;
\draw[,directed] ([c]L10) -- ([c]L12) ;
\draw[,directed] ([c]L11) -- ([c]L12) ;
\end{scope}

\begin{scope}[every coordinate/.style={shift={(0*\x,-1* \y)}}]
\node at ([c]lab) {$P_{21}$};
\fill[black!100] ([c]L1) circle(0.5ex)
		 ([c]L2) circle(0.5ex)
		 ([c]L3) circle(0.5ex)
		 ([c]L4) circle(0.5ex)
		 ([c]L5) circle(0.5ex)
		 ([c]L6) circle(0.5ex)
		 ([c]L7) circle(0.5ex)
		 ([c]L8) circle(0.5ex)
		 ([c]L9) circle(0.5ex)
		 ([c]L10) circle(0.5ex)
		 ([c]L11) circle(0.5ex)
		 ([c]L12) circle(0.5ex)
		 ;
\draw[,directed] ([c]L1) -- ([c]L4) ;
\draw[,directed] ([c]L1) -- ([c]L5) ;
\draw[,directed] ([c]L1) -- ([c]L6) ;
\draw[,directed] ([c]L2) -- ([c]L1) ;
\draw[,directed] ([c]L2) -- ([c]L5) ;
\draw[,directed] ([c]L2) -- ([c]L6) ;
\draw[,directed] ([c]L3) -- ([c]L1) ;
\draw[,directed] ([c]L3) -- ([c]L2) ;
\draw[,directed] ([c]L3) -- ([c]L6) ;
\draw[,directed] ([c]L4) -- ([c]L7) ;
\draw[,directed] ([c]L5) -- ([c]L4) ;
\draw[,directed] ([c]L5) -- ([c]L7) ;
\draw[,directed] ([c]L5) -- ([c]L8) ;
\draw[,directed] ([c]L6) -- ([c]L4) ;
\draw[,directed] ([c]L6) -- ([c]L5) ;
\draw[,directed] ([c]L6) -- ([c]L7) ;
\draw[,directed] ([c]L6) -- ([c]L8) ;
\draw[,directed] ([c]L6) -- ([c]L9) ;
\draw[,directed] ([c]L7) -- ([c]L10) ;
\draw[,directed] ([c]L7) -- ([c]L11) ;
\draw[,directed] ([c]L7) -- ([c]L12) ;
\draw[,directed] ([c]L8) -- ([c]L7) ;
\draw[,directed] ([c]L8) -- ([c]L11) ;
\draw[,directed] ([c]L8) -- ([c]L12) ;
\draw[,directed] ([c]L9) -- ([c]L7) ;
\draw[,directed] ([c]L9) -- ([c]L8) ;
\draw[,directed] ([c]L9) -- ([c]L12) ;
\draw[,directed] ([c]L11) -- ([c]L10) ;
\draw[,directed] ([c]L12) -- ([c]L10) ;
\draw[,directed] ([c]L12) -- ([c]L11) ;
\end{scope}

\begin{scope}[every coordinate/.style={shift={(\x, -1*\y)}}]
\node at ([c]lab) {$P_{22}$};
\fill[black!100] ([c]L1) circle(0.5ex)
		 ([c]L2) circle(0.5ex)
		 ([c]L3) circle(0.5ex)
		 ([c]L4) circle(0.5ex)
		 ([c]L5) circle(0.5ex)
		 ([c]L6) circle(0.5ex)
		 ([c]L7) circle(0.5ex)
		 ([c]L8) circle(0.5ex)
		 ([c]L9) circle(0.5ex)
		 ([c]L10) circle(0.5ex)
		 ([c]L11) circle(0.5ex)
		 ([c]L12) circle(0.5ex)
		 ;
\draw[,directed] ([c]L1) -- ([c]L4) ;
\draw[,directed] ([c]L1) -- ([c]L5) ;
\draw[,directed] ([c]L1) -- ([c]L6) ;
\draw[,directed] ([c]L2) -- ([c]L1) ;
\draw[,directed] ([c]L2) -- ([c]L5) ;
\draw[,directed] ([c]L2) -- ([c]L6) ;
\draw[,directed] ([c]L3) -- ([c]L1) ;
\draw[,directed] ([c]L3) -- ([c]L2) ;
\draw[,directed] ([c]L3) -- ([c]L6) ;
\draw[,directed] ([c]L4) -- ([c]L7) ;
\draw[,directed] ([c]L5) -- ([c]L4) ;
\draw[,directed] ([c]L5) -- ([c]L7) ;
\draw[,directed] ([c]L5) -- ([c]L8) ;
\draw[,directed] ([c]L6) -- ([c]L4) ;
\draw[,directed] ([c]L6) -- ([c]L5) ;
\draw[,directed] ([c]L6) -- ([c]L7) ;
\draw[,directed] ([c]L6) -- ([c]L8) ;
\draw[,directed] ([c]L6) -- ([c]L9) ;
\draw[,directed] ([c]L7) -- ([c]L10) ;
\draw[,directed] ([c]L8) -- ([c]L7) ;
\draw[,directed] ([c]L8) -- ([c]L10) ;
\draw[,directed] ([c]L8) -- ([c]L11) ;
\draw[,directed] ([c]L9) -- ([c]L7) ;
\draw[,directed] ([c]L9) -- ([c]L8) ;
\draw[,directed] ([c]L9) -- ([c]L10) ;
\draw[,directed] ([c]L9) -- ([c]L11) ;
\draw[,directed] ([c]L9) -- ([c]L12) ;
\draw[,directed] ([c]L11) -- ([c]L10) ;
\draw[,directed] ([c]L12) -- ([c]L10) ;
\draw[,directed] ([c]L12) -- ([c]L11) ;
\end{scope}

\end{tikzpicture}
\caption{The extensions of $P_1$ (top row) and of $P_2$ (bottom row)}\label{P1-P2-extensions}
\end{center}
\end{figure}

$P_1$ has two possible extensions by an extra layer, but both of them can be extended to semi-transitive orientation: see $P_{11}$ and $P_{12}$ in Figure~\ref{P1-P2-extensions}. Two external layers of
$P_{11}$ and $P_{12}$ form $P_2$ and $N_2$, respectively, as desired.

Finally, $P_2$ has two possible extensions by an extra layer, but both of them can be extended to semi-transitive orientation: see $P_{21}$ and $P_{22}$ in Figure~\ref{P1-P2-extensions}. Two external layers of
$P_{21}$ and $P_{22}$ form $P_1$ and $N_1$, respectively, as desired.

We are done.

\section{Concluding remarks}\label{sec5}

It is easy to see that any triangulation of a GCCG with more than three sectors contains no $K_4$ as an induced subgraph. Using this fact, it is not difficult to see that the orientation $O$ defined in Section~\ref{sec3} contains no {\em transitively} oriented induced subgraphs on four or more vertices. That means that removing any directed edge in such an orientation, the resulting graph will avoid shortcuts (and directed cycles). As a particular case of this observation, when only diagonal edges can be removed, we have the following generalization of Theorem~\ref{thm-morethan-3}.

\begin{thm} Given a GCCG $G$ with more than three sectors, triangulate some of cells of $G$ to obtain a graph $T$. Then $T$ is word-representable if and only if it contains no $W_5$ or $W_7$ as an induced subgraph. \end{thm}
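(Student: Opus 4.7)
The necessity direction is immediate: $W_5$ and $W_7$ are not word-representable, and word-representability passes to induced subgraphs, so any $T$ containing $W_5$ or $W_7$ fails to be word-representable. For sufficiency, suppose $T$ has no induced $W_5$ or $W_7$. I would first complete $T$ to a full triangulation $T^{\star}\supseteq T$ of $G$ that is itself free of induced $W_5$ and $W_7$. The idea is to triangulate each untriangulated cell so that every layer of $T^{\star}$ becomes uniform in type (in the sense of the definition preceding Lemma~\ref{structure-lem}), since the construction used in the proof of Lemma~\ref{orient-semi-trans-lem} then delivers a semi-transitive orientation $O$ of $T^{\star}$, forcing $T^{\star}$ to be word-representable by Theorem~\ref{semitra}.

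Once $T^{\star}$ and $O$ are in hand, I would use two key facts. First, any triangulation of a GCCG with $m\geq 4$ sectors contains no induced $K_4$: within a single cell the single diagonal yields at most $K_4$ minus an edge, and four vertices spread across two or more cells are always too far apart to be pairwise adjacent when $m\geq 4$. Second, as a consequence, the orientation $O$ has no transitively oriented induced subgraph on four or more vertices, since any such transitive tournament would be supported on an underlying $K_k$ with $k\geq 4$. Let $O'$ denote the restriction of $O$ to the edges of $T$. Acyclicity of $O'$ is inherited from $O$, and a would-be shortcut in $O'$ would supply a directed path $v_1\rightarrow\cdots\rightarrow v_k$ with $k\geq 4$ and an arc $v_1\rightarrow v_k$ in $T$ (hence in $T^{\star}$) together with some missing arc $v_i\rightarrow v_j$. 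Semi-transitivity of $O$ on $T^{\star}$ would then force every $v_i\rightarrow v_j$ with $1\leq i<j\leq k$ to be present in $T^{\star}$, so $\{v_1,\ldots,v_k\}$ would induce a transitive tournament on at least four vertices in $O$, contradicting the second fact above. Thus $O'$ is a semi-transitive orientation of $T$, and $T$ is word-representable by Theorem~\ref{semitra}.

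The hardest point in this plan is the completion step: one has to verify that any $W_5$- and $W_7$-free partial triangulation $T$ can be extended to a full triangulation $T^{\star}$ that is itself $W_5$- and $W_7$-free. The proof of Lemma~\ref{structure-lem} shows that two triangulated cells of different types that are adjacent in some layer already embed $W_5$ or $W_7$ into $T$, so the triangulated cells of $T$ are at least locally consistent on each layer; the subtle task is to propagate this local consistency into a globally uniform type on every layer of $T^{\star}$ using the freedom provided by the untriangulated cells. Once this completion is established, the rest of the argument is a direct consequence of the semi-transitivity of $O$ and the absence of $K_4$ in triangulations of GCCGs with $m\geq 4$.
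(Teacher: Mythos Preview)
Your approach is exactly the one the paper uses: both arguments pass through a full $W_5,W_7$-free triangulation $T^\star\supseteq T$, exploit the absence of induced $K_4$ in triangulations of GCCGs with $m\geq 4$ to conclude that the orientation $O$ of Section~\ref{sec3} has no transitive tournament on four or more vertices, and then restrict $O$ to $T$. You are also right to flag the completion step as the crux; the paper simply asserts it implicitly (``when only diagonal edges can be removed'').

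Unfortunately, the completion step you single out is not merely hard --- it is false as stated, so the plan as written does not close. Take $m=4$, $n=2$, and triangulate only the four cells in sectors $S_1$ and $S_3$: in $S_1$ use the diagonals $v_{0,0}v_{1,1}$ and $v_{1,0}v_{2,1}$, and in $S_3$ use the diagonals $v_{0,2}v_{1,3}$ and $v_{1,3}v_{2,2}$. The resulting partial triangulation $T$ has no induced $W_5$ or $W_7$ (the two triangulated strips share no vertices, and a short degree check rules out any wheel hub), and in fact $T$ is $3$-colourable. However, the $L_2$-cell in $S_1$ is of type~$A$ while the $L_2$-cell in $S_3$ is of type~$B$, and these types are determined entirely by diagonals already present in $T$. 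By Lemma~\ref{structure-lem}, \emph{every} full triangulation of $G_{4,2}$ extending $T$ therefore contains $W_5$ or $W_7$, so no $W_5,W_7$-free completion $T^\star$ exists. Consequently you cannot invoke the orientation $O$ of Section~\ref{sec3} on any supergraph of this $T$, and the deletion argument never gets off the ground. (The same example shows, incidentally, that no single choice of a layer-wise vertical direction in the style of $O$ can work directly on $T$: choosing ``same'' on $L_2$ produces a shortcut in $S_3$, while ``opposite'' produces one in $S_1$.)

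The upshot is that your outline coincides with the paper's own sketch, and both share the same genuine gap: the reduction to a full $W_5,W_7$-free triangulation is not always available. Establishing the theorem for partial triangulations needs an additional idea --- for instance, a direct semi-transitive orientation of $T$ that allows the vertical direction to vary along a layer, or a different supergraph in place of a full triangulation of the same GCCG.
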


Unfortunately, such a generalization for Theorem~\ref{thm-3} does not follow directly from our proofs. Indeed, for example, the graph $N_{11}$ in Figure~\ref{N1-N2-extensions} contains a {\em transitively} oriented copy of $K_4$ (in the middle of the graph), and removing the rightmost edge in the $K_4$ we will obtain a shortcut. Thus, we leave it as an open problem to decide whether avoidance of the graphs in Figure~\ref{non-repr-induced-subgraphs} characterize triangulations of selected cells in a GCCG with three sectors, and if not then to find such a characterization.

\section*{Acknowledgments}

The work of the first and the third authors was supported by the 973 Project, the PCSIRT Project of the Ministry of Education and the National Science Foundation of China. The  second author is grateful to Bill Chen and Arthur Yang for their hospitality during the author's stay at the Center for Combinatorics at Nankai University in June-July 2015. All the authors are also grateful to the  Center for Applied Mathematics at Tianjin University for its generous support.


\begin{thebibliography}{13}

\bibitem{Akrobotu} P. Akrobotu, S. Kitaev, and Z. Masarova. On word-representability of polyomino triangulations, {\em Siberian Adv. Math.} {\bf 25} (2015) 1, 1--10.

\bibitem{CKS} T. Z.Q. Chen, S. Kitaev, and B. Y. Sun. Word-representability of subdivisions of triangular grid graphs, arXiv:1503.08002.

\bibitem{Collins14} A. Collins, S. Kitaev, and V. V. Lozin. New results on word-representable graphs, {\em Discrete Appl. Math.} (2014), In Press.

\bibitem{Glen} M. Glen. Software to work with word-representability of graphs. Available at \\ \verb>personal.cis.strath.ac.uk/sergey.kitaev/research.html.>

\bibitem{GK} M. Glen and S. Kitaev. Word-representability of triangulations of  rectangular polyomino with a single domino tile, arXiv:1503.05076.

\bibitem{Halldorsson11} M. Halld\'{o}rsson, S. Kitaev, and A. Pyatkin. Alternation graphs, {\em Lecture Notes in Comp. Sci.} {\bf 6986} (2011) 191--202. Proceedings of the 37th International Workshop on Graph-
Theoretic Concepts in Computer Science, WG 2011, Tepl\'{a} Monastery, Czech Republic, June 21--24, 2011.

\bibitem{Halldorsson10} M. Halld\'{o}rsson, S. Kitaev, and A. Pyatkin. Graphs capturing alternations in words. In Proceedings of the 14th international conference on Developments in language theory, DLT'10, pages 436--437, Berlin, Heidelberg, 2010. Springer-Verlag.

\bibitem{Halldorsson15} M. Halld\'{o}rsson, S. Kitaev, and A. Pyatkin. Semi-transitive orientations and word-representable graphs, {\em Discr. Appl. Math.}, to appear.

\bibitem{KitLoz} S. Kitaev and V. Lozin. Words and graphs, Springer, to appear in 2015.

\bibitem{Kitaev08a} S. Kitaev and A. Pyatkin. On representable graphs {\em J. Autom. Lang. Comb.} {\bf 13} (2008) 1, 45--54.

\bibitem{Kitaev08b} S. Kitaev and S. Seif. Word Problem of the Perkins
Semigroup via Directed Acyclic Graphs, {\em Order} {\bf 25} (2008) 3, 177--194.

\bibitem{Kitaev11c} S. Kitaev, P. Salimov, C. Severs, and H. \'{U}lfarsson. On the representability of line graphs Lecture Notes in Computer Science {\bf 6795} (2011), 478--479 Proc. 15th Conf. on Developments in Language Theory, DLT 2011, University of Milano-Bicocca, Milan, Italy, July 19--22, 2011.

\bibitem{Kitaev11} S. Kitaev, P. Salimov, C. Severs, and H. \'{U}lfarsson. Word-representability and line graphs, {\em Open J. Discr. Math.} {\bf 1} (2011) 2, 96--101.

\end{thebibliography}
\end{document}